\definecolor{Red}{rgb}{1,0,0}
\definecolor{Blue}{rgb}{0,0,1}
\definecolor{Olive}{rgb}{0.41,0.55,0.13}
\definecolor{Yarok}{rgb}{0,0.5,0}
\definecolor{Green}{rgb}{0,1,0}
\definecolor{MGreen}{rgb}{0,0.8,0}
\definecolor{DGreen}{rgb}{0,0.55,0}
\definecolor{Yellow}{rgb}{1,1,0}
\definecolor{Cyan}{rgb}{0,1,1}
\definecolor{Magenta}{rgb}{1,0,1}
\definecolor{Orange}{rgb}{1,.5,0}
\definecolor{Violet}{rgb}{.5,0,.5}
\definecolor{Purple}{rgb}{.75,0,.25}
\definecolor{Brown}{rgb}{.75,.5,.25}
\definecolor{Grey}{rgb}{.5,.5,.5}
\def\red{\color{Red}}
\def\dgreen{\color{DGreen}}
\newcommand{\mnote}[1]{{\red Madhu's Note: #1}}
\newcommand{\dnote}[1]{{\dgreen David's Note: #1}}
\newcommand{\bfU}{{\bf U}}
\newcommand{\bfu}{{\bf u}}
\newcommand{\bfV}{{\bf V}}
\newcommand{\bfv}{{\bf v}}
\newcommand{\bfZ}{{\bf Z}}
\newcommand{\bfz}{{\bf z}}
\newcommand{\bfPhi}{{\bf\Phi}}
\newcommand{\pr}{\mathbb{P}}
\newcommand{\E}{\mathbb{E}}
\newcommand{\G}{\mathbb{G}}
\newcommand{\F}{\mathbb{F}}
\newcommand{\sign}{\mathrm{sign}}
\newcommand{\ignore}[1]{\relax}
\newtheorem{theorem}{Theorem}[section]
\newtheorem{lemma}[theorem]{Lemma}
\newtheorem{prop}[theorem]{Proposition}
\newtheorem{proposition}[theorem]{Proposition}
\newtheorem{coro}[theorem]{Corollary}
\newtheorem{definition}[theorem]{Definition}
\newtheorem{observation}[theorem]{Observation}
\newcommand{\Lovasz}{Lov\'{a}sz}
\newcommand{\local}{decimation}
\definecolor{Red}{rgb}{1,0,0}
\definecolor{Blue}{rgb}{0,0,1}
\definecolor{Olive}{rgb}{0.41,0.55,0.13}
\definecolor{Green}{rgb}{0,1,0}
\definecolor{MGreen}{rgb}{0,0.8,0}
\definecolor{DGreen}{rgb}{0,0.55,0}
\definecolor{Yellow}{rgb}{1,1,0}
\definecolor{Cyan}{rgb}{0,1,1}
\definecolor{Magenta}{rgb}{1,0,1}
\definecolor{Orange}{rgb}{1,.5,0}
\definecolor{Violet}{rgb}{.5,0,.5}
\definecolor{Purple}{rgb}{.75,0,.25}
\definecolor{Brown}{rgb}{.75,.5,.25}
\definecolor{Grey}{rgb}{.5,.5,.5}
\definecolor{Pink}{rgb}{1,0,1}
\definecolor{DBrown}{rgb}{.5,.34,.16}
\definecolor{Black}{rgb}{0,0,0}
\def\red{\color{Red}}
\def\dgreen{\color{DGreen}}
\author{
{\sf David Gamarnik}\thanks{MIT; e-mail: {\tt gamarnik@mit.edu}.Research supported  by the NSF grants CMMI-1335155.}
\and
{\sf Madhu Sudan}\thanks{Microsoft Research New England; e-mail: {\tt madhu@mit.edu}}
}
\begin{document}

\title{Performance of Survey Propagation guided decimation algorithm for the random NAE-$K$-SAT problem}
\date{}

\maketitle

\begin{abstract}
We show that the Survey Propagation-guided decimation algorithm fails to find satisfying
assignments on random instances of the ``Not-All-Equal-$K$-SAT'' problem if
the number of message passing iterations is bounded by a
constant
independent of the size of the instance and the clause-to-variable ratio is above $(1+o_K(1)){2^{K-1}\over K}\log^2 K$ for sufficiently large $K$.
Our analysis in fact applies to a broad
class of algorithms that may be described as ``sequential local algorithms''.
Such algorithms iteratively set variables based on some local
information and/or local randomness, and then recurse on the reduced
instance. Survey Propagation (SP)-guided as well as Belief Propagation (BP)-guided decimation algorithms --- two  widely studied message passing based algorithms,
fall under this category of algorithms provided the number of message passing iterations is bounded by a constant. Another well-known algorithm falling into this category
is the Unit Clause (greedy) algorithm. Our work constitutes the first rigorous analysis of the performance of the SP-guided decimation algorithm.

The approach underlying our paper is based on an intricate geometry of the solution space of random NAE-$K$-SAT problem. We show that above the
$(1+o_K(1)){2^{K-1}\over K}\log^2 K$
threshold, the overlap structure of $m$-tuples of satisfying assignments exhibit a certain clustering behavior expressed in the form of some constraints
on pair-wise distances between the $m$ assignments, for appropriately chosen positive integer $m$. We further show that if a sequential local algorithm succeeds in finding
a satisfying assignment with probability bounded away from zero, then one can construct an $m$-tuple of solutions violating these constraints, thus
leading to a contradiction. Along with~\cite{gamarnik2013limits}, where a similar approach was used by the authors in a (somewhat simpler) setting of non-sequential local algorithms,
this result is the first work which directly links the clustering property of random constraint satisfaction problems to the computational hardness of
finding satisfying assignments.
\end{abstract}

\section{Introduction}

In this work we study the behavior of some ``natural'', statistical-physics-motivated,
algorithms for constraint satisfaction problems
on random instances. These algorithms, specifically BP-guided and SP-guided decimation algorithms, exhibited a spectacular performance empirically,
capable of finding solutions very rapidly and very close to the thresholds, beyond which the satisfying assignments do not exist or are conjectured not to exist.
A partial list of references documenting the performance of these algorithms includes the following papers~\cite{mezard2002analytic},\cite{braunstein2005survey},\cite{krzakala2007gibbs},
\cite{ricci2009cavity},\cite{dall2008entropy},\cite{kroc2012survey} as well as the book by Mezard and Montanari~\cite{MezardMontanariBook}.
At the same time, mathematically rigorous analysis of these algorithms is mostly lacking. Notable exceptions are the works of Coja-Oghlan~\cite{coja2011belief}
who analyzed the perforamnce of the BP-guided decimation algorithm
for random K-SAT problem, and of Maneva et al.~\cite{maneva2007new} who reformulate Survey Propagation algorithm as the Belief Propagation algorithm on a ''lifted''
Markov Random Field. No rigorous results on the performance of the SP-guided algorithm is available, to the best of our knowledge.

\subsection{Our setting and results}

In this work we consider a class of algorithms which we dub ``sequential local algorithms'' that capture natural local implementations of BP-guided and SP-guided decimation algorithms. We analyze their behavior on random instances of
``Not-All-Equal-K-SAT (NAE-$K$-SAT)''. We describe the NAE-$K$-SAT problem,
and our class of algorithms in that order below.

The NAE-$K$-SAT problem
a Boolean constraint satisfaction problem closely related to more commonly studied K-SAT problem.
An instance of the NAE-$K$-SAT problem consists of a collection of $m$
$K$-clauses on $n$ Boolean variables $x_1,\ldots,x_n$.
Each $K$-clause
is given by $K$-literals, where each literal is either one of the variables
or its negation. The clause is satisfied by a Boolean assignment to the
variables if at least one of the literals is satisfied (set to $1$) and at
least one is unsatisfied (set to $0$).
(This symmetry between satisfied and unsatisfied literals lends a convenient symmetry to the NAE-$K$-SAT problem that is not shared by the K-SAT counterpart).

In this work we consider the ability to find satisfying assignments to
random instances of the NAE-$K$-SAT problem. Here the $m$ clauses are chosen
uniformly and independently from the collection of $2^K\cdot {n \choose
K}$ possible $K$-clauses. In particular we consider the setting where $m =d\cdot n$ for some constant $d = d(K)$ which depends on $K$, but not $n$, and
consider what is the largest $d$ for which there exists a efficient algorithm for identifying a
satisfying assignment with probability approaching one as $n\rightarrow\infty$. The parameter $d$ is often referred to as the clause density.
Of course, no algorithm can find a satisfying
assignment if none exists; and the limit of when such an assignment exists
is well-studied.
In particular Coja-Oghlan and and Panagiotou~\cite{coja2012catching}
have established that random instances of the NAE-$K$-SAT problem are
satisfiable w.h.p. when the density $d$ is below
$d_s\triangleq 2^{K-1} \ln 2 -\ln 2/2-1/4-o_K(1)$, and is not satisfiable w.h.p. when $d>d_s$,
Here $o_K(\cdot)$ denotes a function converging to zero as $K$ increases. (A similar convention is adopted for other notations for orders of magnitude).
Our interest is in determining how qualitatively close to this threshold an efficient algorithm can get,
i.e., how does the largest density at which the algorithm manages to find satisfying
assignments compare with $d_s$.

The class of algorithms that we explore in this work are what we call
``sequential local algorithms''. This is a class that abstracts algorithms
such as the
BP- and SP-guided decimation algorithms when the number of message
passage iterations used in every decimation step is bounded by a
constant $r$, independent of the size of the instance. (BP- and SP-guided decimation algorithms really form a very general class with many possible implementations and interpretations. In Section~\ref{subsection:SPEmpirical}
we discuss the specific assumptions we make and their
potential limitations.)
A sequential local algorithm can be described roughly as follows. The algorithm works by assigning Boolean values to
variables sequentially, where a chosen variable is assigned its value by a
potentially probabilistic choice, which depends on the local neighborhood
of the variable at the time the choice is made. The local neighborhood is defined to be the graph-theoretic $B(r)$ ball of constant $r$
radius with respect to the underlying factor graph on the set of variables and clauses, to be defined later.
Once a variable is assigned a value, the formula is simplified
(removing some clauses, and restricting others). This in turn may
influence the local neighborhoods of other variables, and when the future variables are set to particular Boolean values,
this is done with respect to thus possibly modified neighborhoods. The algorithm
continues with its iterations till all variables are set.
In the specific context of  BP-guide decimation algorithm based on $r$ iterations, the local rule  assigns value $1$ to a variable $x$
with probability equal to the
fraction of assignments in which $x$ is assigned value $1$
among all assignments that
satisfy all clauses in the local neighborhood $B(r)$.
The SP-guided decimation algorithm uses a more complex rule for its assignments. It is based on lifting the Boolean constraint satisfaction problem
to a constraint satisfaction problem involving three decisions, as opposed to two decisions, but otherwise follows the same spirit.

Our main contribution (Theorem~\ref{theorem:MainResultNAE-$K$-SAT})
is to show that, with high probability (w.h.p.)
as the size of the instance diverges to infinity,
every ``balanced'' sequential local algorithm fails to produce a satisfying assignment when the ratio $d$ of the number of clauses to the number of variables exceeds
$(1+o_K(1)){2^{K-1}\over K}\log^2 K$ and clause size $K$ is sufficiently large (but independent from the number of variables).
``Balance'' is a technical condition explained in Definition~\ref{defn:balance},
which says that the local algorithm respects the inherent symmetry between
$0$ and $1$. It is a condition satisfied by all known algorithms inlcuding
BP- and SP-guided decimation.

Our bound on the ratio $d$ is reasonably close to bounds at which simple
algorithms actually work.
In particular, it is well-known that a very simple Unit Clause algorithm is capable of finding satisfying assignments for this problem when $d$ is below
$\rho{2^{K-1}\over K}$ for some universal constant $\rho$,~\cite{achlioptas2002two} for $K$ sufficiently large.
The Unit Clause algorithm is a special case of the sequential local algorithm,
as we will show in the paper, and is the best known algorithm for this problem. (A better algorithm is known for the random K-SAT problem which
works up to clause to variables density $(1-o_K(1)){2^{K}\over K}\log K$~\cite{coja2010better}. It is likely that a similar idea can be applied to
the NAE-K-SAT setting, but such a result is not available to the best of our knowledge).
One of the hopes was that BP- and SP-guided decimation algorithms may be
able to bridge this factor of $K$ between the unit clause algorithm and the
satisfiability threshold $d_s$ above.
Our result however implies that, short of possibly a $\log^2 K$
multiplicative factor, the ''infamous'' factor $O(K)$ gap between the satisfiability threshold
and the region achievable by known algorithms cannot be  broken by means of sequential local
algorithms, in particular by BP- and SP-guided decimation algorithms with constant number of
rounds of message passing iterations.

Previously, Coja-Oghlan
~\cite{coja2011belief}
showed that the BP-guided decimation algorithm fails to
find satisfying assignments for random K-SAT problem when $d\ge \rho {2^K\over K}$ for some
universal constant $\rho$, for an arbitrary number of iterations $r$, which in particular might depend on the number of variables.
(Here $2^K$ factor is an ''appropriate'' substitution for $2^{K-1}$ when switching from NAE-$K$-SAT to the K-SAT problem. We maintain this distinction, even though
technically it is eliminated by constant $\rho$). It is reasonable to expect that his result holds also for NAE-$K$-SAT problem using the same analysis.
Thus our result reproduces the main result of~\cite{coja2011belief} but only in the special case of bounded number of iterations (short of additional $\log^2 K$ factor).
At the same time, however, our result is applied in a ''blanket way'' to a broad class of algorithms, including most notably SP-guided decimation algorithm,
and, unlike the analysis of~\cite{coja2011belief}, our analysis is insensitive to the details of the algorithm.

\subsection{Techniques}

Our main proof technique relies on the intricate geometry of the solution space of the random NAE-$K$-SAT problem. Specifically it relies on the so-called
\emph{overlap} structure of satisfying assignments of random NAE-$K$-SAT, which was earlier established for random $K$-SAT problem, and several other related problems,
including the problem of proper coloring of sparse random graphs.
Roughly speaking, the property says that, above a certain density,
the Hamming distance between every pair of satisfying assignments, commonly called \emph{overlap} in statistical physics literature,
normalized by the number of variables, is either smaller than a certain constant $\delta_1$
or larger than some constant $1\ge \delta_2>\delta_1$. As as result the solutions can be grouped into different subsets (clusters) based on their proximity to each other.
For the case of NAE-$K$-SAT problem this 2-overlap property can be established for densities $d$ exceeding approximately $d>d_s/2$. (A weaker
version of this result corresponding to "almost" all pairs does hold at densities above $O\left({d_s\over K}\log K\right)$~\cite{montanari2011reconstruction}).
Unfortunately, this is not strong enough to cover the regime of $d>(d_s/K)\log^2K$ claimed in our main theorem, so instead we have
to establish a certain property regarding $m$-overlaps of satisfying assignments,
for appropriately chosen $m$. This is the essence of Theorem~\ref{theorem:StrongClustering} which we prove in this paper.
Roughly speaking this theorem says that when $d\ge (1+\epsilon){d_s\over K}\log^2K $, and $K$ is sufficiently large,
one cannot find $m\approx \epsilon K/\log K$ satisfying assignments such
that the Hamming distance (overlap) between every pair of the assignment normalized by the number of variables is  $\approx\log K/K$.
We then show that for every $\beta\in (0,1)$, if a sequential local algorithm was capable of finding a satisfying assignment, with probability bounded away from zero,
then by running the algorithm $m$ times and constructing a certain interpolation scheme, one can construct $m$ satisfying assignments such that the pairwise normalized
distance between any pair of these assignments is $\approx \beta$ w.h.p., thus contradicting Theorem~\ref{theorem:StrongClustering}.

The link between the clustering property and the ensuing demise of local algorithms was recently established by authors~\cite{gamarnik2013limits}
in a different context of finding a largest independent set in a  random regular graph. There the argument was used to show that so-called i.i.d. factor based local algorithms
are incapable of finding nearly largest independent sets in random regular graphs, refuting an earlier conjecture by Hatami, \Lovasz ~and Szegedy~\cite{HatamiLovaszSzegedy}.
The result was further strengthened by Rahman and Virag~\cite{rahman2014local}, who obtained essentially the tightest possible result, using $m$-overlap structures
of ''large'' independent set. Our use of $m$-overlaps is inspired by this work, though the set of restrictions on the $m$-overlaps implied by Theorem~\ref{theorem:StrongClustering}
is much simpler than the one appearing in~\cite{rahman2014local}.

An important technical and conceptual difference between the present work and that of~\cite{gamarnik2013limits} and \cite{rahman2014local}
is that algorithms considered in the aforementioned papers  are not sequential. Instead the decision taken by each variable in those models are
taken simultaneously for all variables.
In the case of sequential local algorithms, since the variables are set sequentially, the decision for one variable can be non-localized for the remaining variables,
this creating potential long-range dependencies. We deal with this potential long-range impact of decisions as follows. We associate variables with random i.i.d. weights chosen
from an arbitrary continuous distribution, for example a uniform distribution. The weights are used solely to determine the order of fixing the values of the variables
during the progression of the sequential local algorithm. Specifically largest weight first rule is used.
The decision to fix the value of a particular variable then can only impact variables with lower weights.
Specifically if the value of variable $x$ is fixed now, the value of variable $y$ can be impacted \emph{only} if there exists a sequence $x_0=x,x_1,\ldots,x_\ell=y$
such that the distance between $x_i$ and $x_{i+1}$ is at most $r$ (the radius of the decision making rule) and the weight of $x_i$ is larger than that of $x_{i+1}$
for all $i$.
For a given set of variables $x_0,\ldots,x_\ell$ the likelihood of this total order of variables is $1/\ell!$ which decays faster than exponential function in $l$.
This coupled with the fact that the growth rate of nodes at distance at most $r\ell$ from $x$ is at most exponential in $\ell$ (since $r$ is assumed to be constant),
will allow us to control the range of influence of the variable $x$ when its value is set. A similar idea of controlling the range of influence is used in the analysis
of local algorithms in several places, including~\cite{nguyen2008constant}. Bounding the ranges of influences is a crucial idea in implementing the interpolation scheme
and constructing $m$ assignments with "non-existence" normalized overlaps $\beta$.

\subsection{Contrast with empirical studies of SP-guided decimation}\label{subsection:SPEmpirical}
The literature on BP-, and especially, SP-guided decimation
(for instance ~\cite{braunstein2005survey}, \cite{krzakala2007gibbs}, \cite{ricci2009cavity}, \cite{MezardMontanariBook}) has shown that
these algorithms perform well empirically on random instances of $K$-SAT for small values of $K$ ($K\le 10$). There
are several choices where these implementations differ (or may differ) from
the setting we study: (1) They analyze SAT, as opposed to NAESAT; and the
asymmetry in SAT may already make a difference for the algorithm. (2) They
study 3SAT, so very local constraints, while we study $K$-clauses where $K$
is constant but large, and this increase in the
locality of the constraints may make it harder for local algorithms to function effectively (even though the locality of the algorithm
can be chosen to be arbitrarily after $K$ is fixed).
(3) In the empirically analyzed algorithms, the order in which
variables are set is not fixed a priori, but may
depend on the probability estimates returned by the message passing
iterations. While this could possibly also affect the ability of the
algorithms to finding satisfying assignments,
there appears to be no reasons based on the statistical physics theory which implies that such a presorting of variables
is a crucial for SP-guided decimation algorithm to succeed. Size biasing rather appears to be a sensible implementation detail of the algorithm. (Some discussion
of the accuracy of the size-biased version vs random order can be found in~\cite{kroc2012survey}).
(4) Finally,
and probably most significantly, we analyze algorithms that work with a
constant number of rounds of message passing iterations, and this allowed us to fit it within the framework of sequential local algorithms. In contrast the
empirical studies suggest using message passing till the iterations converge
and this may take more than linear or even exponential number of rounds.
However it is believed that the message passing procedure do converge at
some geometric rate $\gamma < 1$. Thus after $r$ iterations the ``error''
would be at most $\gamma^r$ - exponentially small in the number of
iterations.
Indeed this geometric rate of convergence is established in most examples where the model is amenable to analysis,
including~\cite{AldousSteele:survey},\cite{gamarnikMaxWeightIndSet},\cite{BandyopadhyayGamarnikCounting}, though not yet for NAE-$K$-SAT or 3SAT.
However, if this belief about the convergence rate is correct also for NAE-$K$-SAT,
then stopping after a large constant number of message passing iterations and using the estimates
to guide the sequential decision process does seem like a reasonable
heuristic.

Thus our work and setting makes a collection of choices that are different from some of the
earlier works in the hope of getting some formal analysis.
Unfortunately our result show that when the four choices are combined, it definitely produces a
provable difference, and  the algorithms fail to find satisfying assignments at densities that are qualitatively below the satisfiability threshold. Of course, it would be important to reduce the number
of parameters in which the choices for the negative results differ from
those used in the empirical setting (which yielded positive results) and we hope this will be
a subject of future work.

\subsection{Future work}

As mentioned above it would be important to understand analytically what is the largest denisty at which the SP-guided decimation
algorithm can find satisfying assignments, without the restriction of ``constant number of
message passing iterations".
In particular, it would be interesting
to investigate whether the SP-guided algorithm is capable of breaking the ''$m$-overlap barrier'' when the number of iterations is unbounded.
Conversely, it remains open to get any analytic results showing that the more complex
SP-guided decimation has an advantage over more conventional algorithms. Here it would be
interesting to see if there is any implementation choice in the algorithm that offers a
provable advantage.

Finally, going beyond specific classes of algorithms, a major challenge is to understand the
intrinsic complexity of finding satisfying assigments in random instances of $K$-SAT and
NAE-$K$-SAT problems.
Given the repeated failure to produce polynomial time algorithms for, say NAE-$K$-SAT,
above the density threshold of as $(1+o_K(1)){2^{K-1}\over K}\log^2 K$ threshold
it is plausible that the problem is actually average-case hard in this regime. The formalism of problems which are NP-hard on average is
available~\cite{levin1986average}, however
the problem which are known to be hard on average are not particularly natural and are quite distant from the types of problems considered here.
Another problem that has defied designing a fast algorithm, and which is closer in spirit to the problems considered
in this and related papers,
is the problem of finding the largest independent set in a dense random graph.
Specifically, consider the graph $\G(n,1/2)$ where every one of the $n(n-1)/2$ of the undirected edges is present with probability $1/2$ independently for all edges.
It is known that the largest independent set has size $2(1+o(1))\log_2 n$ w.h.p. At the same time the best known algorithm (greedy) finds only an independent
set of size $(1+o(1))\log_2 n$ and bridging this gap has been a major open problem in the field of combinatorics and random graphs since Karp posed this as an open
problem back in 1976~\cite{karp1976probabilistic}. It is entirely plausible that this problem is NP-hard on average, and resolving this question one way or the
other is a major open problem in theoretical computer science. It is furthermore worth noting that this problem does
indeed exhibit the clustering property at the $(1+o(1))\log_2 n$ threshold, namely at the known algorithmic threshold.
Specifically, fixing any $\beta\in (0,1)$, one can show that there exists $0<\delta_1(\beta)<\delta_2(\beta)$ such that for every pair of independent sets $I_1,I_2$ each with size
$(1+\beta+o(1))\log_2 n$ (namely the size which is existentially achievable, but not achievable by known polynomial time algorithms), it holds that
$|I_1\cap I_2|$ is either at most $(1+\delta_1+o(1))\log_2 n$ or at least $(1+\delta_2+o(1))\log_2 n$. Namely, the model exhibits the clustering property
similar to the one considered for independent sets in~\cite{coja2011independent} and~\cite{gamarnik2013limits}. It is simple to show this fact by considering
the expected values of the number of pairs of an independent sets with a given size and given overlap.
By drawing an analogy with this, admittedly
very different setup of  dense random graph, and in light of 40 years old repeated failure to produce an algorithm for this problem, it
is plausible to conjecture that NAE-K-SAT and related problems are NP-hard on average above thresholds corresponding to the emergence
of clustering type properties. Shedding some light on this question is perhaps one of the most interesting problem in the area of random constraint satisfaction problems.

\paragraph{Organization and notational conventions.}
Our main result and applications to the BP-guided and SP-guided decimation algorithms are the subject of the next section. Some preliminary technical results
are established in Section~\ref{section:LongRangeIndependence}. In particular, we establish bounds on the influence range of variables.
The property regarding $m$-overlaps of satisfying assignments is established in Section~\ref{section:Clustering}.
The proof of the main result is in Section~\ref{section:ProofOfMainTheorem}.

Throughout the paper we use standard order of magnitude notations $O(\cdot),o(\cdot)$, for sequences defined in terms of the number of Boolean variables $n$.
The constants hidden by this notation may depend on any other parameters of the model, such as $K$ and $d$. Similarly we use notations $O_K(\cdot)$ and
$o_K(\cdot)$ to denote sequences indexed by $K$ as $K\rightarrow\infty$. The constants hidden in these notations are universal.

\section{Formal statement of main result}
\label{section:NotationBasics}

In this section we formally present our main result. Before doing so
we first
introduce the mathematical notation and preliminaries needed to state
our result.

\subsection{Not-All-Equal-K-Satisfiability (NAE-$K$-SAT) problem}
At the expense of being redundant, let us recall the NAE-$K$-SAT problem.
An instance $\Phi$, of NAE-$K$-SAT problem is described as a collection of $n$ binary variables $x_1,\ldots,x_n$ taking values $0$ and $1$
and a collection of $m$ clauses $C_1,\ldots,C_m$
where each clause is given by a subset of $K$ literals. Each literal is a variable $x$ in $x_1,\ldots,x_n$ or negation $\bar x$ of a variable.
An assignment is a function $\sigma:\{x_1,\ldots,x_n\} \to \{0,1\}$.
$\sigma$ satisfies $\Phi$ if in every clause, we have at least one literal valued
$1$ and at least one literal valued $0$.
For every assignment $\sigma=(\sigma(x_i), 1\le i\le n)$, let $\bar\sigma=1-\sigma$ be the assignment given
by $\bar\sigma(x_i)=1-\sigma(x_i), 1\le i\le n$.
Given a formula $\Phi$, denote by $\mathbb{SAT}(\Phi)\subset \{0,1\}^n$
the (possibly empty) set of satisfying assignments $\sigma$.
The following ``complementation closure'' and resulting ``balance'' property
of
NAE-$K$-SAT are immediate (and do not hold for the $K$-SAT problem)

\begin{observation}\label{observation:Symmetry}
For every instance $\Phi$ of the NAE-$K$-SAT problem and assignment
$\sigma$, we have that $\sigma$ satisfies $\Phi$ if and only if
$\bar\sigma$ satisfies $\Phi$.
Consequently, suppose $\mathbb{SAT}(\Phi) \ne \emptyset$. Then if
$\sigma$ is drawn uniformly from
$\mathbb{SAT}(\Phi)$, then for every $1 \leq i \leq n$
we have $$\pr(\sigma(x_i) = 0) = \pr(\sigma(x_i) = 1) = 1/2.$$
\end{observation}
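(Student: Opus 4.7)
The plan is straightforward since both assertions follow directly from the defining symmetry of NAE-$K$-SAT under literal-flipping. The first claim is the substantive one and the second is a counting consequence.

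For the first part, I would fix an arbitrary clause $C$ of $\Phi$, say with literals $\ell_1, \ldots, \ell_K$ (each $\ell_j$ being $x_{i_j}$ or $\bar x_{i_j}$). The clause is satisfied by $\sigma$ precisely when the multiset $\{\sigma(\ell_1), \ldots, \sigma(\ell_K)\} \subseteq \{0,1\}$ contains both $0$ and $1$. Under the complementary assignment, each literal value flips, i.e., $\bar\sigma(\ell_j) = 1 - \sigma(\ell_j)$ for every $j$, so $\{\bar\sigma(\ell_1), \ldots, \bar\sigma(\ell_K)\}$ is obtained from $\{\sigma(\ell_1), \ldots, \sigma(\ell_K)\}$ by swapping $0$ and $1$. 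This swap preserves the property ``contains both values,'' so $C$ is satisfied by $\sigma$ iff it is satisfied by $\bar\sigma$. Taking the conjunction over all clauses $C_1, \ldots, C_m$ of $\Phi$ yields $\sigma \in \mathbb{SAT}(\Phi) \iff \bar\sigma \in \mathbb{SAT}(\Phi)$.

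For the second part, I would observe that the map $\sigma \mapsto \bar\sigma$ is an involution on $\{0,1\}^n$ with no fixed point (every coordinate flips), and by the first part it restricts to an involution on $\mathbb{SAT}(\Phi)$. Fixing any index $i$, partition $\mathbb{SAT}(\Phi) = S_0 \sqcup S_1$ where $S_b = \{\sigma \in \mathbb{SAT}(\Phi) : \sigma(x_i) = b\}$. The complementation involution sends $S_0$ bijectively onto $S_1$, hence $|S_0| = |S_1|$, so $|S_0| = |S_1| = |\mathbb{SAT}(\Phi)|/2$. Dividing by $|\mathbb{SAT}(\Phi)|$ (nonzero by hypothesis) gives $\pr(\sigma(x_i) = 0) = \pr(\sigma(x_i) = 1) = 1/2$ under the uniform distribution on $\mathbb{SAT}(\Phi)$.

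There is no real obstacle here; the argument is entirely definitional and uses only the symmetry of the ``not-all-equal'' predicate under global bit-flip. The only subtlety worth flagging explicitly is that this fails for ordinary $K$-SAT because the OR predicate is not preserved under complementation, which is precisely why the NAE variant is convenient for our setting.
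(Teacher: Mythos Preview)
Your proof is correct. The paper itself does not give a proof of this observation; it simply declares both parts ``immediate'' (noting parenthetically that they fail for $K$-SAT), and your argument is exactly the natural unpacking of that immediacy: the NAE predicate is invariant under the global bit-flip, so complementation is a fixed-point-free involution on $\mathbb{SAT}(\Phi)$ that swaps $\{\sigma(x_i)=0\}$ with $\{\sigma(x_i)=1\}$.
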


\paragraph{Reduced Instances.}

We now introduce some notations for {\em reduced} instances of
NAE-$K$-SAT. A clause of a reduced instance $C$ is given by a set
of at most $K$ literals, along with a sign $\sign(C) \in \{+,-,0\}$.
Furthermore, $C$ has exactly $K$ literals if and only if
$\sign(C) = 0$.
(Sometimes we refer to these signs as decorations.)
An assignment $\sigma$ satisfies a reduced clause $C$ if one of
the following occurs: $\sign(C) = +$ and some literal in $C$ is assigned
$0$ by $\sigma$,
OR $\sign(C) = -$ and some literal in $C$ is assigned $1$ by
$\sigma$, OR $\sign(C) = 0$ and there is at least one $0$ literal and
one $1$ literal in $C$ under the assignment $\sigma$.
A reduced NAE-$K$-SAT instance $\Phi$
consists of one or more reduced clauses, and $\sigma$ satisfies $\Phi$ if it satisfies all clauses
in $\Phi$.

Note that Observation~\ref{observation:Symmetry} does not necessarily hold for the reduced instances of NAE-$K$-SAT problem.
Instances in which every clause has sign $0$ will be called non-reduced
instances.

\paragraph{Complements }

Given a clause $C$ in a reduced instance of NAE-$K$-SAT, its
{\em complement}, denoted $\bar{C}$, is the clause with the same set
of literals, and its sign being flipped --- so
if $\sign(C) = +$ then $\sign(\bar{C}) = -$,
if $\sign(C) = -$ then $\sign(\bar{C}) = +$,
and if $\sign(C) = 0$ then $\sign(\bar{C}) = 0$.
Given a reduced instance $\Phi$ of NAE-$K$-SAT, its {\em complement}
$\bar{\Phi}$ is the instance with the complements of clauses of $\Phi$.

We now make the following observation, whose proof is immediate.

\begin{observation}\label{observation:SymmetryReduced}
Given reduced instances $\Phi$ on variables $x_1,\ldots,x_n$ and
$\Psi$ on variables $x_1,\ldots,x_{n+t}$ suppose $\Phi$ is the instance
derived by reducing $\Psi$ with the assignment
$\sigma:\{x_{n+1},\ldots,x_{n+t}\}$. Then
$\bar{\Phi}$ is the reduced instance obtained by reducing $\bar{\Psi}$
with the assignment $\bar{\sigma}$, where $\bar{\sigma}(x_i) =
1 - \sigma(x_i)$.
\end{observation}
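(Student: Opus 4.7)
The plan is to prove the observation by reducing it to a clause-by-clause verification: since reducing an instance by an assignment and complementing an instance both act independently on each clause, it suffices to show that for any single reduced clause $C$ on variables including $x_{n+1},\ldots,x_{n+t}$, the reduction of $\bar C$ by $\bar\sigma$ yields the complement of the reduction of $C$ by $\sigma$, with the convention that if one of these operations eliminates the clause (by satisfying it) then so does the other. Summing over all clauses of $\Psi$ then yields the result.

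The key elementary fact I would invoke up front is that for any literal $\ell$ on a variable in $\{x_{n+1},\ldots,x_{n+t}\}$, the value of $\ell$ under $\bar\sigma$ equals $1$ minus its value under $\sigma$: this is immediate since $\bar\sigma(x_i)=1-\sigma(x_i)$ and negation itself flips the value. With this in hand the three cases $\sign(C)\in\{+,-,0\}$ are dispatched in parallel. For $\sign(C)=+$: the reduction of $C$ by $\sigma$ eliminates $C$ iff some literal involving a variable in $\{x_{n+1},\ldots,x_{n+t}\}$ evaluates to $1$ under $\sigma$; $\bar C$ has sign $-$, and its reduction by $\bar\sigma$ eliminates $\bar C$ iff some such literal evaluates to $0$ under $\bar\sigma$, which is the same condition. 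When neither is eliminated, both reductions retain the same residual set of literals, with signs $+$ and $-$ respectively — complements of each other. The case $\sign(C)=-$ is symmetric. For $\sign(C)=0$, note $\bar C=C$; the clause is satisfied under $\sigma$ iff the removed literals contain both a $0$ and a $1$ under $\sigma$, which, by the value-flipping fact, is exactly the condition for $C$ to be satisfied under $\bar\sigma$. If neither is satisfied, the residual clause from reducing $C$ by $\sigma$ takes sign $+$ precisely when all removed literals are $0$ under $\sigma$ and sign $-$ when all are $1$; passing to $\bar\sigma$ swaps these alternatives, so the residual signs flip while the residual literals are preserved, giving exactly the complement.

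There is no substantive obstacle here; the only thing to keep careful track of is the sign bookkeeping in the three cases, and the whole argument is indeed essentially a one-line reduction once the value-flipping fact for literals under $\bar\sigma$ is stated.
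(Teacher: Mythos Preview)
Your proposal is correct and is exactly the natural way to spell out what the paper leaves implicit: the paper does not actually give a proof, merely declaring the observation ``immediate,'' and your clause-by-clause verification is the obvious unpacking of that claim. Two very minor points you could add for completeness: (i) a clause of $\Psi$ involving none of $x_{n+1},\ldots,x_{n+t}$ passes through both reductions unchanged, and complementation commutes trivially; (ii) the reduction described in the paper removes \emph{violated} as well as satisfied clauses, so in each signed case you should also note that $C$ is violated under $\sigma$ iff $\bar C$ is violated under $\bar\sigma$ --- but this follows by the same value-flipping argument you already use.
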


Namely, whenever a reduced formula $\Phi$ is obtained from a non-reduced formula $\tilde\Phi$ by setting some variables of $\Psi$,
setting the same variables to opposite values generates the complement $\bar\Psi$ of $\Psi$.

\paragraph{Random NAE-$K$-SAT problem}
We denote by $\bfPhi(n,dn)$ a random (non-reduced) instance of NAE-$K$-SAT problem on variables $x_1,\ldots,x_n$ and $\lfloor dn\rfloor$ clauses $C_1,\ldots,C_m$
generated as follows. The variables in each clause $C_j$ are chosen from $x_1,\ldots,x_n$ uniformly at random without replacement,
independently for all $j=1,2,\ldots,m$. Furthermore, each $x$ variable is negated (namely appears as $\bar x$) with probability $1/2$
independently for all variables in the clause and for all clauses. We are interested in the regime when $n\rightarrow\infty$ and $d$
is constant. $d$ is called the \emph{density} of clauses to variables.

\paragraph{Graphs associated with NAE-$K$-SAT instances.}
Two graphs related to an instance $\Phi$ of
the NAE-$K$-SAT problem are important to us.
The first is the so-called {\em factor graph}, denoted $\F(\Phi)$,
which is a bipartite undirected graphs
with left nodes corresponding to the variables and right nodes corresponding to the clauses.
A clause node is connected to a variable node if and only if this variable appears in this clause.
The edges are
labelled positive or negative to indicate the polarity of the literal in
the clause.
In the cases of reduced NAE-$K$-SAT instances, clause vertices
are also labelled with the sign of the clause. Thus the factor graph of a NAE-$K$-SAT instance uniquely defines this instance.

The second graph we associate with $\Phi$ is the
{\em variable-to-variable} graph of $\Phi$, denoted
$\G(\Phi)$, which has nodes corresponding to the variables
and two nodes are adjacent if they appear in the same clause.
Note that in contrast to the factor graph, the variable-to-variable
graph loses information about the NAE-$K$-SAT instance $\Phi$.

\paragraph{Local neighborhoods}

Given a (possibly reduced) instance $\Phi$ of a NAE-$K$-SAT problem,  a variable $x$ in this instance,
and an even integer $r\ge 1$, we denote by $B_{\Phi}(x,r)$ the corresponding depth-$r$
neighborhood of $x$ in $\F(\Phi)$,
the factor graph of $\Phi$.
When the underlying formula $\Phi$ is unambiguous, we simply write $B(x,r)$.
We restrict $r$ to be even so that for every clause appearing in $B(x,r)$ all of its associated
variables also appear in $B(x,r)$.
Abusing notation slightly we also use $B(x,r)$ to denote the reduced
instance of NAE-$K$-SAT induced by the clauses in $B(x,r)$ alone.
Since $r$ is even we have that the factor graph of this induced instance
is $B(x,r)$.
In light of this, observe that $B(x,r)$ is also a reduced instance of a NAE-$K$-SAT problem.

\subsection{Sequential local algorithms for NAE-$K$-SAT problem and the main result}
We now define the notion of sequential local algorithms formally
and describe our main result.

Fix a positive even integer $r\ge 0$. Denote by $\mathcal{SAT}_r$ the set of all NAE-$K$-SAT reduced and non-reduced instances $\Psi$ with a designated
(root) variable $x$ such
that the distance from $x$ to any other variable in $\Psi$ is at most $r$ in $\F(\Psi)$. We note that $\mathcal{SAT}_r$ is an infinite set,
since even though the depth of the  factor graph $F(\Psi)$  of any $\Psi\in\mathcal{SAT}_r$ is bounded by $r$, the degree is not. The set $\mathcal{SAT}_r$
is the set of all instances $\Psi$ which can be observed as depth $r$ neighborhood $B_\Phi(x,r)$ of an arbitrary variable $x$ in an arbitrary
reduced and non-reduced  NAE-$K$-SAT instance $\Phi$.

Consider any function $\tau:\mathcal{SAT}_r\rightarrow [0,1]$ which takes as an argument an arbitrary
member $\Psi\in\mathcal{SAT}_r$ and outputs a value (probability) in
$[0,1]$.
We now describe a sequential local algorithm, which we refer to
as the {\em $\tau$-\local\ algorithm}, for
solving NAE-$K$-SAT problem.
Given a positive even integer $r$, the depth-$r$ neighborhood
$B(x_i,r)=B_{\bfPhi(n,dn)}(x_i,r)\in\mathcal{SAT}_r$ of any fixed variable $x_i\in [n]$ in the formula $\bfPhi(n,dn)$, rooted at $x_i$,
is a valid argument of the function $\tau$,  when the root
of the instance $B(x_i,r)$ is assigned to be $x_i$. This remains the case when some of the variables $x_1,\ldots,x_n$ are set to particular
values and all of the satisfied and violated clauses are removed. In this case $B(x_i,r)$ is a reduced instance. In either case,
the value $\tau(B(x_i,r))$ is well defined for every variable $x_i$ which is not set yet.
The value $\tau(B(x_i,r))$ is intended to represent the probability with which the variable $x_i$ is set to take value $1$ when its neighborhood is
a reduced or non-reduced instance $B(x_i,r)$, according to the local algorithm. Specifically, we
now describe how the function $\tau$ is used as a basis of a local algorithm to generate an assignment $\sigma:\{x_1,\ldots,x_n\}\rightarrow \{0,1\}$.

\vspace{.1in}
\noindent
{\textbf{$\tau$-\local\ algorithm}
\vspace{.1in}
\noindent\\
INPUT: \\
an instance $\Phi$ of a NAE-$K$-SAT formula on binary variables $x_1,\ldots,x_n$, \\
a positive even integer $r$,\\
function $\tau$.\\
\\
\noindent
Set $\Phi_0=\Phi$.\\
FOR $i=1:n$\\
Set $\sigma(x_i)=1$ with probability $\tau(B_{\Phi_{i-1}}(x_i,r))$ \\
Set $\sigma(x_i)=0$ with the remaining probability $1-\tau(B_{\Phi_{i-1}}(x_i,r))$.\\
Set $\Phi_i$ to be the reduced instance obtained from $\Phi_{i-1}$ by fixing the value of $x_i$ as above, removing satisfied and violated clauses and
decorating newly generated partially satisfied clauses with $+$ and $-$ appropriately. \\
\\
OUTPUT $\sigma(x_1),\ldots,\sigma(x_n)$.
\\
\vspace{.2in}\\
}
In particular, even if at some point a contradiction is reached and one of the clauses is violated, the algorithm does not stop but proceeds after
the removing violated clauses from the formula. This is assumed for convenience so that the output $\sigma(x_i)$ is well defined for all variables $x_i, 1\le i\le n$
even if the assignment turns out to be not satisfying. We denote by $\sigma_{\Phi,\tau}$ the (random) output $\sigma(x_1),\ldots,\sigma(x_n)$
produced by the $\tau$-\local\ algorithm above.
We say that $\tau$-\local\ algorithm solves instance $\Phi$ if the output $\sigma_{\Phi,\tau}$ is a satisfying assignment, namely
$\sigma_{\Phi,\tau}\in\mathbb{SAT}(\Phi)$.

We now formally define the following important symmetry condition.

\begin{definition}
\label{defn:balance}
We say that a local rule $\tau:\mathcal{SAT}_r \to [0,1]$ is {\em
balanced}
if for every instance $\Phi \in \mathcal{SAT}_r$, we have
$\tau(\bar{\Phi}) = 1 - \tau(\Phi)$.
\end{definition}

The balance condition above basically says that the
$\tau$-\local\ algorithm
does not have a prior bias in setting variables to $1$ vs $0$.
In particular, when the instance is non-reduced, $\tau$-\local\ algorithm sets variable values equi-probably, consistently with Observation~\ref{observation:Symmetry}.
This condition will allow us to take advantage of Observation~\ref{observation:SymmetryReduced} when applying the rule $\tau$ to reduced instances.

We now state the main result of the paper.
\begin{theorem}\label{theorem:MainResultNAE-$K$-SAT}
For every $\epsilon>0$ there  exists $K_0$ such that for every $K\ge K_0$,  $d>(1+\epsilon)2^{K-1}\ln^2 K/K$, every even $r>0$ and
every balanced local rule $\tau:\mathcal{SAT}_r\rightarrow [0,1]$
the following holds:
\begin{align*}
\lim_{n\rightarrow\infty}\pr(\sigma_{\bfPhi(n,dn),\tau}\in\mathbb{SAT}(\bfPhi(n,dn)))=0.
\end{align*}
\end{theorem}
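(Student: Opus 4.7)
Assume, for contradiction, that there exist $\epsilon>0$, an even $r>0$, and a balanced local rule $\tau:\mathcal{SAT}_r\to[0,1]$ such that $\pr(\sigma_{\bfPhi(n,dn),\tau}\in\mathbb{SAT}(\bfPhi(n,dn))) \ge p > 0$ along a subsequence of $n$, for some $d > (1+\epsilon)2^{K-1}\ln^2 K/K$. The strategy is to use the algorithm, together with a coupling/interpolation device, to exhibit (with positive probability over the joint randomness) an $m$-tuple of satisfying assignments of $\bfPhi(n,dn)$ whose pairwise normalized Hamming distances all lie within $o(1)$ of a prescribed target $\beta \asymp \ln K/K$, where $m \asymp \epsilon K/\ln K$. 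Since Theorem~\ref{theorem:StrongClustering} rules out such a tuple w.h.p.\ at the stated density, this yields the desired contradiction.

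The first ingredient is a device for controlling the range of influence of individual variables under the sequential rule. Attach i.i.d.\ continuous weights $w_1,\dots,w_n$ to the variables and process them in decreasing-weight order. Since the rule $\tau$ only consults the depth-$r$ ball $B(x,r)$, the decision at $x$ can propagate to another variable $y$ only along a chain $x=y_0,y_1,\dots,y_\ell=y$ whose consecutive terms are within factor-graph distance $r$ and whose weights are strictly decreasing. A fixed chain of length $\ell$ is compatible with its weight ordering with probability $1/\ell!$, whereas the number of such candidate chains emanating from a fixed $x$ grows at most exponentially in $\ell$ (since $r$ is constant and the local degrees of $\F(\bfPhi(n,dn))$ have bounded-by-Poisson tails). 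Combining these bounds, as carried out in Section~\ref{section:LongRangeIndependence}, forces the effective influence region of every single variable to have size $n^{o(1)}$ w.h.p.; in particular, resampling the randomness at one variable changes the algorithm's output in $o(n)$ coordinates.

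The second ingredient is the interpolation. Write the algorithm's randomness as $U=(w,c)$, weights and per-variable coins. The balance condition $\tau(\bar\Phi)=1-\tau(\Phi)$, combined with Observation~\ref{observation:SymmetryReduced} and the invariance of $\bfPhi(n,dn)$ under variable complementation, forces two independent runs on the same formula to produce outputs $\sigma$ and $\sigma'$ with $\E[d_H(\sigma,\sigma')]\approx n/2$. For $t\in[0,1]$, define a hybrid seed $U^{(t)}$ that uses, independently at each variable, the coins from $U$ with probability $t$ and from $U'$ otherwise (with a shared weight vector), and let $\sigma^{(t)}$ be the corresponding output. The $n^{o(1)}$-influence bound implies that $t\mapsto d_H(\sigma,\sigma^{(t)})/n$ is quasi-continuous, with jumps of size $o(1)$ as each variable's coin is swapped; since it moves from $0$ at $t=1$ to $\approx 1/2$ at $t=0$, an intermediate value argument produces some $t^\star$ for which $d_H(\sigma,\sigma^{(t^\star)})/n \in [\beta-o(1),\beta+o(1)]$ for any prescribed $\beta\in(0,1/2)$. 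To lift this from pairs to $m$-tuples, pick independent seeds $U_1,\dots,U_m$ and a reference seed $U_0$, fix a random subset $S\subseteq[n]$ of suitable density, and let $V_i$ copy $U_0$ on $S$ and $U_i$ off $S$. The outputs $\sigma_i=A_\tau(\bfPhi,V_i)$ pairwise differ only through the $U_i,U_j$ portions restricted to $[n]\setminus S$, so the same coupling logic delivers pairwise normalized distances near $\beta$. Conditioning on all $m$ runs being satisfying, an event of probability at least $p^m>0$ by Jensen's inequality applied to $\E[(\pr(\text{success}\mid\bfPhi))^m]$, yields the forbidden $m$-tuple and closes the contradiction.

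The main obstacle is making the interpolation argument quantitatively robust: all $\binom{m}{2}$ pairwise distances must concentrate around $\beta$ simultaneously, jointly with the event that each of the $m$ runs is satisfying. The $n^{o(1)}$ influence bound fuels this via a bounded-differences concentration inequality applied to the coins, but stringing the bounds across correlated hybrid seeds and then carrying out the intermediate-value step in the presence of $o(1)$ stochastic jumps requires care; this is where the influence estimates from Section~\ref{section:LongRangeIndependence} play their sharpest role. Once this technical core is in place, applying Theorem~\ref{theorem:StrongClustering} to the constructed $m$-tuple delivers the desired contradiction and hence Theorem~\ref{theorem:MainResultNAE-$K$-SAT}.
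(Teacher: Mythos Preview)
Your proposal is correct and follows essentially the same architecture as the paper's proof: combine Theorem~\ref{theorem:StrongClustering} with a construction (the paper's Proposition~\ref{prop:local-overlap}) that, assuming $\limsup_n\alpha_n>0$, produces an $m$-tuple in $\mathbb{SAT}(\bfPhi;\beta,\eta,m)$ with positive probability. All the key ingredients you identify---the $1/\ell!$ influence-range bound, the balance property forcing $\E[\rho]=n/2$, the interpolation to hit the target overlap, Azuma-type concentration, and Jensen to lower-bound the joint satisfiability probability by $\alpha_n^m$---are exactly those used in the paper.

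The only differences are cosmetic. The paper parameterizes the interpolation by the integer $t\in\{0,\ldots,n\}$, letting $\bfV^{t,j}$ take its first $t$ coins from $\bfU^j$ and the rest from $\bfU^0$, then picks a single deterministic $t_0=t_0(\Phi,\bfz)$ that works simultaneously for all $j$ (by symmetry the expected distance is the same for every $j$). Your description instead mixes coins with probability $t$ coordinatewise, then restates this as a random subset $S$ of suitable density; these are equivalent devices. The paper also applies Jensen at the level of conditioning on the shared tail $U^0_{t_0+1},\ldots,U^0_n$ (where the $m$ runs become conditionally independent), whereas you phrase it as conditioning on $\bfPhi$; both versions need the conditional-independence step followed by convexity of $t\mapsto t^m$, so your sketch is accurate in spirit though the precise conditioning deserves a line of care when written out.
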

Namely, with overwhelming probability, $\tau$-\local\ algorithm fails to find a satisfying assignment.
As we have mentioned above, the threshold for satisfiability is $d_s=2^{K-1}\ln 2-\ln 2/2-1/4-o_K(1)$. Thus
our theorem implies that sequential local algorithms fail to find a satisfying assignment at densities
approximately $(d_s/K)\ln^2K$.

\subsection{BP-guided and SP-guided decimation algorithms as local sequential algorithms}\label{subsection:BPandSP}

We now show that BP-guided decimation and SP-guided decimation algorithms are in fact special cases of $\tau$-\local\ algorithms as described in the
previous section, when the number of message passing iterations is bounded by a constant independent from $n$.
As a consequence we have that the negative result given by Theorem~\ref{theorem:MainResultNAE-$K$-SAT}
applies to these algorithms as well.

The BP and SP algorithms are designed to compute certain marginal values
associated with a NAE-$K$-SAT instance $\Phi$ and reduced instances
obtained after some of the variables are set.
The natural interpretation of these marginals is that variables may be set according
to these marginals sequentially, while refining the marginals as decisions are
made.
It is common to call such algorithms BP-guided decimation algorithm and
SP-guided decimation algorithms. We now describe
these algorithms in detail, starting from the BP and BP-guided decimation algorithms.

\paragraph{Belief Propagation.}

The BP algorithm is a particular message-passing type algorithm based on
variables and clauses exchanging messages on the bi-partite factor graph
$\F(\Phi(n,dn))$.
After several rounds of such exchange of messages, the messages are combined
in a specific way to compute marginal probabilities.

However, the relevant part for us is the fact that if the messages are passed only a constant $r$ number of rounds, then for every variable $x_i$
such that the neighborhood $B(x_i,r)$ is in fact a tree,  the computed marginals $\mu(x_i)$ are precisely the ratio of the number
of assignments satisfying NAE-$K$-SAT formula $B(x_i,r)$ which set $x_i$ to one to the number of such assignments which set this variable to zero.
A standard fact is that for the majority of variables  $B(x_i,r)$ is indeed a tree.
Thus most of the times BP iterations compute marginal values corresponding to the ratio described above.
These marginals are then used to design the BP-guided decimation algorithm as follows. Variable $x_1$ is selected and BP algorithm is used to compute
its marginal $\mu(x_1)$ with respect to the neighborhood tree $B(x_1,r)$. Then the decision $\sigma(x_1)$ for this variable is set to $\sigma(x_1)=1$ with probability
$\mu(x_1)/(\mu(x_1)+1)$ and $\sigma(x_1)=0$ with probability $1/(\mu(x_1)+1)$. Namely, the variable is set probabilistically proportionally to the ratio
of the number of solutions setting it to one vs the number of solutions setting it to zero. After the decision for variable $x_1$ is set in the way described above,
the variable $x_2$ is selected  from  the reduced formula on variables $x_2,\ldots,x_n$.
The marginal $\mu(x_2)$ with respect to the neighborhood $B(x_2,r)$ for this reduced formula is computed and the value $\sigma(x_2)$ is determined based on
$\mu(x_2)$ similarly, and so on. The procedure is called BP-guided decimation algorithm. It is thus parametrized by the computation depth $r$.

It is clear that the such a BP-guided decimation algorithm is precisely the $\tau$-\local\ algorithm
where $\tau(B(x_i,r))=\mu(x_i)/(\mu(x_i)+1)$ - the marginal probability of the variable
corresponding to the reduced formula $B(x_i,r)$.
Furthermore, such $\tau$ rule satisfies the balance condition described in Definition~\ref{defn:balance}.
Thus, as an implication of our main result, Theorem~\ref{theorem:MainResultNAE-$K$-SAT}, we conclude that BP-guided
decimation algorithm fails to find a satisfying assignment for $\Phi(n,dn)$ in the regime where our result on $\tau$-\local\ algorithms applies:
\begin{coro}\label{coro:BP}
There exists $K_0$ such that for every $K\ge K_0$,  $d>2^{K-2}\ln 2$ and $r>0$
\begin{align*}
\lim_{n\rightarrow\infty}\pr(\text{BP-guided decimation algorithm with $r$ iterations solves}~\bfPhi(n,dn))=0.
\end{align*}
\end{coro}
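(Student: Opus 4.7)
The plan is to verify the three hypotheses of Theorem~\ref{theorem:MainResultNAE-$K$-SAT} for the local rule induced by $r$-iteration Belief Propagation, and then invoke the theorem. Concretely, I first define $\tau:\mathcal{SAT}_r\to[0,1]$ by letting $\tau(\Psi)=\mu(x)/(\mu(x)+1)$, where $x$ is the root of $\Psi$ and $\mu(x)$ is the ratio produced by running BP on the factor graph $\F(\Psi)$ for $r$ rounds with a symmetric initialization (say, uniform messages $1/2$). Since BP messages after $r$ synchronous rounds depend only on the depth-$r$ neighborhood of the target variable, this quantity is a well-defined function of $\Psi\in\mathcal{SAT}_r$ alone, and the BP-guided decimation algorithm with $r$ rounds is exactly the $\tau$-decimation algorithm applied to $\bfPhi(n,dn)$, as described in the preamble to the corollary.

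Next I verify that $\tau$ is balanced in the sense of Definition~\ref{defn:balance}. The key observation is the complementation symmetry: by Observation~\ref{observation:SymmetryReduced}, for any reduced instance $\Psi$ the map $\sigma\mapsto\bar\sigma$ is a bijection between $\mathbb{SAT}(\Psi)$ and $\mathbb{SAT}(\bar\Psi)$ that swaps the roles of $0$ and $1$ at every variable. Because the BP update equations are themselves invariant under simultaneous complementation of all messages and all edge/clause signs (this is the standard BP reformulation on the NAE factor graph, initialized at the symmetric fixed point $1/2$), after any number of iterations one has $\mu_{\bar\Psi}(x)=1/\mu_\Psi(x)$. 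Therefore
\[
\tau(\bar\Psi)=\frac{1/\mu_\Psi(x)}{1/\mu_\Psi(x)+1}=\frac{1}{\mu_\Psi(x)+1}=1-\frac{\mu_\Psi(x)}{\mu_\Psi(x)+1}=1-\tau(\Psi),
\]
as required. (The tree-vs-cycle distinction plays no role in this symmetry argument; it would only matter for an \emph{accuracy} claim about $\mu(x)$.)

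Finally I check the density hypothesis. For the stated range $d>2^{K-2}\ln 2$, pick $\epsilon=1/2$, say; since $\ln^2 K/K\to 0$ as $K\to\infty$ while $(\ln 2)/2$ is a positive constant, we have
\[
2^{K-2}\ln 2 \;=\; 2^{K-1}\cdot\frac{\ln 2}{2} \;\ge\; (1+\epsilon)\,\frac{2^{K-1}\ln^2 K}{K}
\]
for all $K$ larger than some threshold $K_0$. Thus any $d>2^{K-2}\ln 2$ also satisfies $d>(1+\epsilon)2^{K-1}\ln^2 K/K$, and Theorem~\ref{theorem:MainResultNAE-$K$-SAT} applies to the rule $\tau$ defined above, yielding
\[
\lim_{n\to\infty}\pr\!\bigl(\sigma_{\bfPhi(n,dn),\tau}\in\mathbb{SAT}(\bfPhi(n,dn))\bigr)=0,
\]
which is exactly the claim about BP-guided decimation.

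The one potential obstacle is the balance verification, which rests on the claim that $r$-step BP is equivariant under global complementation of a NAE-SAT reduced instance. This is essentially a bookkeeping exercise in the BP update rules for NAE constraints, but one must be careful that the conventional initialization is itself symmetric (hence the choice of uniform $1/2$ messages); otherwise one would need to use the fact that the $1/2$-initialization is a fixed point under complementation before the $r$ rounds. No other step is delicate: the reduction to $\tau$-decimation is built into the algorithm description, and the threshold comparison is immediate asymptotics in $K$.
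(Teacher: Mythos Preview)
Your proposal is correct and follows the same approach as the paper: the paper's ``proof'' of this corollary is simply the paragraph preceding it, which asserts that BP-guided decimation is a $\tau$-\local\ algorithm with $\tau(B(x_i,r))=\mu(x_i)/(\mu(x_i)+1)$ and that this rule is balanced, then invokes Theorem~\ref{theorem:MainResultNAE-$K$-SAT}. Your write-up is in fact more careful than the paper's, since you explicitly verify the balance condition via the complementation equivariance of the BP updates (the paper only asserts it) and you spell out the elementary comparison $2^{K-2}\ln 2 \ge (1+\epsilon)2^{K-1}\ln^2 K/K$ for large $K$ that the paper leaves implicit.
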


\paragraph{Survey Propagation.}
We now describe Survey Propagation-guided decimation algorithm in similar level of detail.
The algorithm is significantly more complex to describe, but we will show
again that it is a $\tau$-\local\ algorithm when the number of message passing rounds is bounded by a constant independent from $n$, and that
$\tau$ is a balanced rule. As a consequence we will conclude that SP-guided decimation algorithm
also fails to find satisfying assignments for instances with density
larger than $(d_s/K)\ln^2K$ when the number of rounds is bounded by a constant. This is summarized in Corollary~\ref{coro:SP} below.

The setup is similar to the one for BP. In particular in steps $i=1,2,\ldots,n$
certain marginal value is computed and the decision for $x_i$ is again based on this marginal value, except now the marginal values do not correspond
to the ratio of the number of assignments, but rather correspond to ratios when the problem is lifted to a new certain constraint satisfaction problem
with decision variables $0,1,*$. We do not describe here the rationale for this lifting procedure, as this has been documented in many
papers, including~\cite{braunstein2005survey},\cite{maneva2007new},\cite{mezard2002analytic},\cite{MezardMontanariBook}. Instead we simply formally present the SP algorithm
and SP-guided decimation algorithm, following closely \cite{MezardMontanariBook} with the appropriate adjustment from the K-SAT
problem to the  NAE-$K$-SAT problem. We will convince ourselves that SP-guided decimation algorithm is again the special case of a balanced $\tau$-\local\ algorithm.
We will then be able to conclude that SP-guided decimation algorithm fails to find a satisfying assignment with probability approaching unity, in the regime outlined
in our main result, Theorem~\ref{theorem:MainResultNAE-$K$-SAT}.

The SP algorithm is an iterative scheme described as follows. The details and notations are very similar to the ones described in~\cite{MezardMontanariBook}.
Specifically iterations (\ref{eq:barQ1})-(\ref{eq:Q3}) below correspond to iterations (20.17)-(20.20) in this book.
Consider an arbitrary reduced or non-reduced NAE-$K$-SAT formula $\Phi$ on variables $x_1,\ldots,x_N$.
For each iteration $t=0,1,\ldots$, each variable/clause pair $(x,C)$ such that $x$ appears in $C$ (namely there is an edge between $x$ and $C$ in the bi-partite factor
graph representation) is associated
with five random variables $Q_{x,C,U}^t,Q_{x,C,S}^t,Q_{x,C,*}^t,Q_{C,x,S}^t$ and $Q_{C,x,U}^t$. Here is the interpretation of these variables.
Each of them is a message send from variable to a clause containing this variable, or a message from a clause to a variable which belongs to this clause. Specifically,
$Q_{x,C,U}^t(Q_{x,C,S}^t)$ is interpreted as the probability computed at iteration $t$
that the variable $x$ is forced by clauses $D$ other than $C$ to take value which does not (does) satisfy $C$.
$Q_{x,C,*}^t$ is interpreted that none of these forcing takes place. $Q_{C,x,S}^t$ is interpreted as probability computed at iteration $t$ that
all variables $y\in C$ other than $x$ do not satisfy $C$, and thus the only hope of satisfying $C$ is for $x$ to do so.
Similarly, $Q_{C,x,U}^t$ is the probability that all variables $y$ in $C$ other than $x$ do satisfy $C$ and thus the only hope of satisfying
clause $C$ is for $x$ to violate it. The latter case is an artifact of the NAE variant of the problem and need not be introduced in the SP iterations for the
K-SAT problem.

The variables $Q^t$  are then computed as follows.
At time $t=0$ the variables are generated uniformly at random from $[0,1]$ independently for all five variables. Then they are normalized
so that $Q_{x,C,U}^0+Q_{x,C,S}^0+Q_{x,C,*}^0=1$, which is achieved by dividing each term by the sum $Q_{x,C,U}^0+Q_{x,C,S}^0+Q_{x,C,*}^0$.
Similarly, variables $Q_{C,x,S}^0$ and $Q_{C,x,U}^0$ are normalized to sum to one.

Now we describe the iteration procedures at times $t\ge 0$.
For each such pair $x,C$, let $\mathcal{S}_{x,C}$ be the set of clauses containing $x$ other than
$C$, in which $x$ appears in the same way as in $C$. Namely if $x$ appears in $C$ without negation, it appears without negation in clauses in $\mathcal{S}_{x,C}$ as well.
Similarly, if $x$ appears as $\bar x$ in $C$, the same is true for clauses in $\mathcal{S}_{x,C}$. Let $\mathcal{U}_{x,C}$ be the remaining set of clauses containing $x$, namely
clauses, where $x$ appears opposite to the way it appears in $C$. Now for each $t=0,1,2,\ldots$ assume $Q_{x,C,U}^t,Q_{x,C,S}^t,Q_{x,C,*}^t,Q_{C,x,S}^t$ and $Q_{C,x,U}^t$
are defined.  Define the random variable $Q_{x,C,S}^{t+1}$ and $Q_{x,C,U}^{t+1}$ as follows. Suppose $C$ is unsigned in $\Phi$. Then
\begin{align}\label{eq:barQ1}
Q_{C,x,S}^{t+1}=\prod_{y\in C\setminus x}Q_{y,C,U}^t,
\end{align}
and
\begin{align}\label{eq:barQ2}
Q_{C,x,U}^{t+1}=\prod_{y\in C\setminus x}Q_{x,C,S}^t.
\end{align}
Here $C\setminus x$ is the set of variables in clause $C$ other than $x$. The interpretation for this identities is as follows.
When $C$ is not signed, the clause $C$ forces its variable $x$ to satisfy it if all other variables $y$ in $C$ where forced not to satisfy $C$ at previous iteration
due to other clauses. The first identity is the probability of this event assuming the events ''$y$ is forced not to satisfy $C$'' are independent.
The second identity is interpreted similarly, though it is only relevant only for NAE-$K$-SAT problem and does not appear for the corresponding iterations for the K-SAT problem.

If the clause $C$ is signed $+$, then we set $Q_{C,x,S}^{t+1}=0$ and
\begin{align}\label{eq:Q1}
 Q_{C,x,U}^{t+1}=\prod_{y\in C\setminus x}Q_{x,C,S}^t.
\end{align}
The interpretation is that if $C$ is signed $+$, then one of the variables was already set to satisfy it. Thus the only way
the clause $C$ can force $x$ to violate it is when all other variables $y$ are forced to satisfy $C$. Again this is only relevant for the NAE-$K$-SAT problem.
Similarly, if $C$ is signed $-$, then $ Q_{C,x,U}^{t+1}=0$ and
\begin{align}\label{eq:Q2}
 Q_{C,x,S}^{t+1}=\prod_{y\in C\setminus x}Q_{x,C,U}^t.
\end{align}
Next we define variables $R_{x,C,S}^{t+1},R_{x,C,U}^{t+1}$ and $R_{x,C,*}^{t+1}$ which stand for $Q_{x,C,S}^{t+1},Q_{x,C,U}^{t+1}$ and $Q_{x,C,*}^{t+1}$
before the normalization. These random variables are computed  using the following rules:
\begin{align}\label{eq:Q3}
R_{x,C,S}^{t+1}&=\prod_{D\in\mathcal{U}_{x,C}}(1- Q_{D,x,S}^t)
\prod_{D\in\mathcal{S}_{x,C}}(1- Q_{D,x,U}^t)
-\prod_{D\in\mathcal{U}_{x,C}}(1- Q_{D,x,*}^t)
\prod_{D\in\mathcal{S}_{x,C}}(1- Q_{D,x,*}^t),
\end{align}
which is interpreted as follows. The first term in the right-hand side of the expression above is interpreted as the probability
that none of the clauses $D$ in $\mathcal{U}_{x,C}$ force $x$ to take value which satisfies $D$ and therefore violates $C$ (since otherwise a contradiction would be reached)
and none of the clauses $D$ in $\mathcal{S}_{x,C}$ force $x$ to take value which violates $D$ and therefore violates $C$ (since otherwise a contradiction would be reached).
The second term term in the right-hand side is interpreted as the probability variable $x$ is not forced to take any particular value by clauses it belongs to other than $C$.
The difference of the two terms is precisely the probability that $x$ is forced to take value satisfying $C$ and is not forced to take value contradicting this choice.

Similarly, define
\begin{align}\label{eq:Q4}
R_{x,C,U}^{t+1}&=
\prod_{D\in\mathcal{U}_{x,C}}(1- Q_{D,x,U}^t)
\prod_{D\in\mathcal{S}_{x,C}}(1- Q_{D,x,S}^t)
-\prod_{D\in\mathcal{U}_{x,C}}(1- Q_{D,x,*}^t)
\prod_{D\in\mathcal{S}_{x,C}}(1- Q_{D,x,*}^t).
\end{align}
The interpretation for $R_{x,C,U}^{t+1}$ is similar: it is the probability that $x$ is forced to take value violating $C$ and is not forced a contradicting value of satisfying $C$.
Next, define
\begin{align}\label{eq:Q5}
R_{x,C,*}^{t+1}=\prod_{D\in\mathcal{S}_{x,C}\cup \mathcal{U}_{x,C}}(1- Q_{D,x,S}^t- Q_{D,x,U}^t).
\end{align}
$R_{x,C,*}^{t+1}$ is interpreted as the probability that $x$ is not forced in either way by clauses other than $C$. Finally, we
let $Q_{x,C,S}^{t+1},Q_{x,C,U}^{t+1}$ and $Q_{x,C,*}^{t+1}$ to be quantities $R_{x,C,S}^{t+1},R_{x,C,U}^{t+1}$ and $R_{x,C,*}^{t+1}$, respectively,
normalized by their sum $R_{x,C,S}^{t+1}+R_{x,C,U}^{t+1}+R_{x,C,*}^{t+1}$, so that the three variables sum up to one.
The iterations (\ref{eq:barQ1})-(\ref{eq:Q3})
are conducted for some number of steps $t=0,1,\ldots,r$. Next variables $W_x(1)$ and $W_x(0)$ and $W_x(*)$ are computed for all variables $x$ as follows.
Let $\mathcal{S}_x$ be the set of clauses where $x$ appears without negation and let $\mathcal{U}_x$ be the set of clauses where $x$ appears with negation.
Then set
\begin{align}\label{eq:Wx1}
W_x(1)&=\prod_{D\in\mathcal{U}_{x}}(1- Q_{D,x,S}^t)
\prod_{D\in\mathcal{S}_{x}}(1- Q_{D,x,U}^t)
-\prod_{D\in\mathcal{U}_{x}}(1- Q_{D,x,*}^t)
\prod_{D\in\mathcal{S}_{x}}(1- Q_{D,x,*}^t).
\end{align}
$W_x(1)$ is interpreted as probability (after normalization) that variable $x$ is forced to take value $1$, but is not forced to take value zero by all of the clauses
containing $x$. Similarly, we set
\begin{align}\label{eq:Wx0}
W_x(0)&=\prod_{D\in\mathcal{S}_{x}}(1- Q_{D,x,S}^t)
\prod_{D\in\mathcal{U}_{x}}(1- Q_{D,x,U}^t)
-\prod_{D\in\mathcal{S}_{x}}(1- Q_{D,x,*}^t)
\prod_{D\in\mathcal{U}_{x}}(1- Q_{D,x,*}^t).
\end{align}
with a similar interpretation. Then set
\begin{align}\label{eq:Wx*}
W_x(*)&=\prod_{D\in\mathcal{S}_{x}\cup \mathcal{U}_{x}}(1- Q_{D,x,S}^r- Q_{D,x,U}^r),
\end{align}
which is interpreted as the probability (after normalization) that $x$ is not take forced to be either $0$ or $1$. Finally, the values
$W_x(0),W_x(1),W_x(*)$ are normalized to sum up to one. For simplicity we use the same notation for these quantities after normalization.

The random variables $W_x(0),W_x(1),W_x(*)$ are used to guide the decimation algorithm as follows. Given a random formula $\bfPhi(n,dn)$,
variable $x_1$ is selected. The random quantities $W_{x_1}(0),W_{x_1}(1)$ and $ W_{x_1}(*)$ are computed and $x_1$ is set to $1$
if $W_{x_1}(1)>W_{x_1}(0)$ and set it to zero otherwise. The formula is now reduced and contains variables $x_2,x_3,\ldots,x_n$.
Variable $x_2$ is then selected and the random quantities $W_{x_2}(0),W_{x_2}(1)$ are computed with respect to the reduced formula.
Then $ W_{x_2}$ is computed and  $x_2$ is set to $1$ if $W_{x_2}(1)>W_{x_2}(0)$, and set to zero otherwise. The procedure is repeated until
all variables are set. This defines the SP-guided decimation algorithm.

It is clear again that the SP-guided decimation algorithm is the special case of $\tau$-\local\ algorithm,
where $\tau$ function corresponds to the probability of the event $W_x(1)>W_x(0)$, when it applies to a reduced instance $B(x,r)$ with $x$ as its root. The depth
$r$ of the instance corresponds to the number of iterations of the SP procedure. Furthermore, we claim that this $\tau$ rule is balanced.

\begin{prop}\label{prop:tauSPbalanced}
The local rule $\tau$ corresponding to the Survey Propagation iterations is balanced.
\end{prop}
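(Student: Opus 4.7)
The plan is to prove Proposition~\ref{prop:tauSPbalanced} by constructing an explicit measure-preserving coupling between the SP iterations on $\Phi$ and on $\bar\Phi$ that swaps the labels $S$ and $U$ on every message. The key structural observation enabling this coupling is that complementation affects only clause signs (with $0\mapsto 0$, $+\mapsto -$, $-\mapsto +$), while leaving the literal polarities unchanged; consequently the sets $\mathcal{S}_{x,C}$, $\mathcal{U}_{x,C}$, $\mathcal{S}_x$, $\mathcal{U}_x$ are identical for $\Phi$ and $\bar\Phi$. Since the initial messages are chosen i.i.d.\ uniformly on $[0,1]$ and then normalized, the transformation that permutes $(Q^0_{x,C,S},Q^0_{x,C,U},Q^0_{x,C,*})\mapsto(Q^0_{x,C,U},Q^0_{x,C,S},Q^0_{x,C,*})$ and $(Q^0_{C,x,S},Q^0_{C,x,U})\mapsto(Q^0_{C,x,U},Q^0_{C,x,S})$ for every edge of the factor graph is measure-preserving. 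So I can couple the two runs by using these swapped initial messages on $\bar\Phi$.

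Next I would prove by induction on $t$ that the coupling enforces, at every round, the identities $\widetilde{Q}^t_{x,C,S}(\bar\Phi)=Q^t_{x,C,U}(\Phi)$, $\widetilde{Q}^t_{x,C,U}(\bar\Phi)=Q^t_{x,C,S}(\Phi)$, $\widetilde{Q}^t_{x,C,*}(\bar\Phi)=Q^t_{x,C,*}(\Phi)$, and the analogous pair of identities for the clause-to-variable messages. The inductive step amounts to checking the update rules in three cases. For an unsigned clause $C$, the swap falls straight out of comparing (\ref{eq:barQ1}) and (\ref{eq:barQ2}). For a clause signed $+$ in $\Phi$, its complement $\bar C$ is signed $-$ in $\bar\Phi$, so in $\bar\Phi$ we use (\ref{eq:Q2}), which produces $\widetilde{Q}^{t+1}_{C,x,S}(\bar\Phi)=\prod_{y\in C\setminus x}\widetilde{Q}^t_{y,C,U}(\bar\Phi)=\prod_{y\in C\setminus x}Q^t_{y,C,S}(\Phi)=Q^{t+1}_{C,x,U}(\Phi)$ by the induction hypothesis, and symmetrically gives $\widetilde{Q}^{t+1}_{C,x,U}(\bar\Phi)=0=Q^{t+1}_{C,x,S}(\Phi)$; the $-$ to $+$ case is symmetric. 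For the variable-to-clause update (\ref{eq:Q3})--(\ref{eq:Q5}), the swap $S\leftrightarrow U$ on the clause-to-variable inputs transforms the formula for $R^{t+1}_{x,C,S}$ into that for $R^{t+1}_{x,C,U}$, while leaving $R^{t+1}_{x,C,*}$ invariant; since $R_S+R_U+R_*$ is symmetric under the swap, the normalization preserves the identities after passing from $R$ to $Q$.

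Having propagated the swap through $t=0,1,\ldots,r$ iterations, I would apply the same bookkeeping to the terminal quantities (\ref{eq:Wx1})--(\ref{eq:Wx*}). The definition of $W_x(0)$ is precisely the definition of $W_x(1)$ with the roles of $\mathcal{S}_x$ and $\mathcal{U}_x$ exchanged, so the coupling yields $W_x(1)(\bar\Phi)=W_x(0)(\Phi)$ and $W_x(0)(\bar\Phi)=W_x(1)(\Phi)$ pointwise under the coupling. Consequently, the event $\{W_x(1)(\bar\Phi)>W_x(0)(\bar\Phi)\}$ equals the event $\{W_x(0)(\Phi)>W_x(1)(\Phi)\}$, and since the coupling is measure-preserving,
\begin{equation*}
\tau(\bar\Phi)=\pr\bigl(W_x(0)(\Phi)>W_x(1)(\Phi)\bigr)=1-\tau(\Phi)-\pr\bigl(W_x(1)(\Phi)=W_x(0)(\Phi)\bigr).
\end{equation*}

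The remaining point, which I expect to be the only delicate one, is to argue that the tie event has probability zero so that the identity $\tau(\bar\Phi)=1-\tau(\Phi)$ actually holds. Since the $W$'s are rational functions of the initial uniform random messages and the update maps are analytic, the tie locus $\{W_x(1)=W_x(0)\}$ is either an analytic subvariety of positive codimension (hence of Lebesgue measure zero) or the whole parameter space; the latter possibility can be ruled out by exhibiting a single instantiation of the initial messages for which $W_x(1)\neq W_x(0)$, or alternatively by adopting the standard convention that ties are broken by an independent fair coin, which is itself a balanced rule. Either way, the probability of a strict inequality $W_x(1)>W_x(0)$ in $\bar\Phi$ is exactly $1-\tau(\Phi)$, completing the proof that $\tau$ is balanced in the sense of Definition~\ref{defn:balance}.
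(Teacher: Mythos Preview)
Your approach is essentially identical to the paper's: the same measure-preserving coupling that swaps the $S$ and $U$ labels on every message, the same induction on $t$ through the update rules (\ref{eq:barQ1})--(\ref{eq:Q5}), and the same final transfer to $W_x(1),W_x(0)$. The paper writes the coupling with auxiliary variables $P^t$ in place of your $\widetilde Q^t$, but the content is the same.

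You are in fact more careful than the paper at the final step. The paper simply asserts that $Z_x(1)=W_x(0)$ and $Z_x(0)=W_x(1)$ ``further implying $\pr(Z_x(1)>Z_x(0))=1-\pr(W_x(1)>W_x(0))$'', which silently assumes $\pr(W_x(1)=W_x(0))=0$. You correctly isolate this point. One caveat: your option (a), the analytic-variety argument, does not cover every instance in $\mathcal{SAT}_r$. For example, if the root $x$ is isolated (no incident clauses), then $W_x(1)=W_x(0)=0$ identically for \emph{every} initialization, so the tie locus is the whole space and the rule ``set to $1$ iff $W_x(1)>W_x(0)$'' yields $\tau(\Phi)=0$, violating balance since here $\bar\Phi=\Phi$. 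Your option (b), breaking ties with an independent fair coin, is the clean and correct fix; with that convention the proof goes through exactly as you outlined.
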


\begin{proof}
Recall that at the iteration $t=0$, the variables $Q^t$  are chosen independently uniformly at random from $[0,1]$, normalized appropriately.
The main idea of the proof is to use the symmetry of the uniform distribution. Given a formula $\Phi$, we claim
that if we initialize random variables $Q^r$ with
variables $Q_{x,C,U}^0$ and $Q_{x,C,S}^0$ swapped, variables $Q_{C,x,S}^0$ and $ Q_{C,x,U}^0$ swapped, variables $Q_{x,C,*}^0$ left intact,
and apply it to formula $\bar\Phi$ instead of $\Phi$,
we obtain values $W_x(0),W_x(1)$ and $W_x(*)$ such that under this initialization  $W_x(1)>W_x(0)$ holds iff $W_x(0)<W_x(1)$
under the original initialization for the original formula $\Phi$. The claim of the proposition then follows.

We now establish the claim by a simple inductive reasoning. As suggested above, given $Q_{x,C,U}^0$, $Q_{x,C,S}^0$, $Q_{x,C,*}^0$, $ Q_{C,x,S}^0$ and $ Q_{C,x,U}^0$
(after normalization for concreteness), define
\begin{align}
P_{x,C,U}^0&=Q_{x,C,S}^0, \notag\\
P_{x,C,S}^0&=Q_{x,C,U}^0, \notag\\
P_{x,C,*}^0&=Q_{x,C,*}^0, \notag\\
 P_{C,x,S}^0&= Q_{C,x,U}^0, \notag\\
 P_{C,x,U}^0&= Q_{C,x,S}^0. \label{eq:PQsymmetry}
\end{align}
Then define variables $P_{x,C,U}^t,P_{x,C,S}^t,P_{x,C,*}^t, P_{C,x,S}^t$ and $ P_{C,x,U}^t$ with respect to the formula $\bar\Phi$ similarly to the way
variables $Q_{x,C,U}^t,Q_{x,C,S}^t,Q_{x,C,*}^t, Q_{C,x,S}^t$ and $ Q_{C,x,U}^t$ are defined with respect to the formula $\Phi$. We now prove by induction that
the identities (\ref{eq:PQsymmetry}) hold for general $t$ and not just when $t=0$. The base of the induction is given by (\ref{eq:PQsymmetry}). Assume the claim holds for
$t'\le t-1$. Consider any unsigned clause $C$ in $\bar\Phi$. Then this clause is unsigned in $\Phi$ as well.
Applying (\ref{eq:barQ1}) and (\ref{eq:barQ2}), and the inductive assumption, we conclude that the claim holds for $ P_{C,x,S}^t$ and $ P_{C,x,U}^t$ as well. Similarly, if a clause $C$ is signed $+$ in $\bar\Phi$,
then it is signed $-$ in $\Phi$. Applying identities (\ref{eq:Q1}) and (\ref{eq:Q2}), the claim holds for $ P_{C,x,S}^t$ and $ P_{C,x,U}^t$ as well.
The case when $C$ is signed $-$ in $\bar\Phi$ is considered similarly.

We now establish the claim for the three remaining variables $P_{x,C,S}^t,P_{x,C,U}^t,P_{x,C,*}^t$. Note that the sets of clauses $\mathcal{S}_{x,C}$ and
$\mathcal{U}_{x,C}$ are the same for the formulas $\Phi$ and $\bar\Phi$. Applying (\ref{eq:Q3}) to compute $P_{x,C,U}^t$, using the inductive
assumption $ P_{C,x,S}^{t-1}= Q_{C,x,U}^{t-1}, P_{C,x,U}^{t-1}= Q_{C,x,S}^{t-1}$, and comparing with (\ref{eq:Q4}), we see
that $P_{x,C,S}^t=Q_{x,C,U}^t$. Similarly, we see that $P_{x,C,U}^t=Q_{x,C,S}^t$. Finally, applying (\ref{eq:Q5}), we see that $P_{x,C,*}^t=Q_{x,C,*}^t$.
This completes the proof of the induction.

Now define $Z_x(0),Z_x(1)$ and $Z_x(*)$ in terms of $ P^r$ in the same way as $W_x(0),W_x(1)$ and $W_x(*)$ are defined in terms of $ Q^r$, namely
via identities (\ref{eq:Wx1}),(\ref{eq:Wx0}) and (\ref{eq:Wx*}). Again we see that $Z_x(0)=W_x(1),Z_x(1)=W_x(0)$ and $Z_x(*)=W_x(*)$,
further implying $\pr(Z_x(1)>Z_x(0))=1-\pr(W_x(1)>W_x(0))$. Thus the rule $\tau(B_\Phi(x,r))=\pr(W_x(1)>W_x(0))$ is balanced.
\end{proof}

Theorem~\ref{theorem:MainResultNAE-$K$-SAT} then becomes applicable and we conclude:
\begin{coro}\label{coro:SP}
There exists $K_0$ such that for every $K\ge K_0$,  $d>2^{K-2}\ln 2$ and $r>0$
\begin{align*}
\lim_{n\rightarrow\infty}\pr(\text{SP-guided decimation algorithm with $r$ iterations solves}~\bfPhi(n,dn))=0.
\end{align*}
\end{coro}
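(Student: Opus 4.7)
The plan is to derive Corollary~\ref{coro:SP} as a direct application of Theorem~\ref{theorem:MainResultNAE-$K$-SAT}. Three things must be checked: (i) the SP-guided decimation rule fits the $\tau$-\local\ template for some fixed even radius depending only on the number $r$ of message-passing rounds; (ii) the associated rule $\tau$ is balanced in the sense of Definition~\ref{defn:balance}; (iii) the hypothesis $d>2^{K-2}\ln 2$ strengthens the threshold $(1+\epsilon)2^{K-1}\ln^2 K/K$ required by the main theorem, once $K$ is large enough.

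For (i), I would first observe that the iterations defining $Q^t$ in equations~(\ref{eq:barQ1})--(\ref{eq:Q5}) are purely message-passing on the factor graph $\F(\Phi)$: the value $Q^{t+1}_{x,C,\cdot}$ is determined by the $Q^{t}$ values on edges incident to clauses neighboring $x$ (other than $C$), and similarly for clause-to-variable messages. Hence after $r$ rounds of iteration the quantities $W_x(0), W_x(1), W_x(*)$ defined by (\ref{eq:Wx1})--(\ref{eq:Wx*}) are a deterministic function (of the initial uniform seeds) supported entirely on the depth-$r'$ factor-graph neighborhood of $x$, for some even $r' = \Theta(r)$ depending only on $r$. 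The rule $\tau(B(x,r')) := \Pr(W_x(1) > W_x(0))$, where the probability is over the uniform $[0,1]$ seeds assigned to each edge of $B(x,r')$, is therefore a function $\mathcal{SAT}_{r'}\to [0,1]$ as required. Decimation is then exactly the $\tau$-\local\ algorithm: after fixing the first $i-1$ variables, the local message-passing rule applied to $B_{\Phi_{i-1}}(x_i,r')$ determines $\sigma(x_i)$ probabilistically in precisely the way described by the sequential local framework (with the sign decorations on partially satisfied clauses picked up by the cases~(\ref{eq:Q1})--(\ref{eq:Q2})).

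For (ii), I would simply invoke Proposition~\ref{prop:tauSPbalanced}, which was established above and shows $\tau(\bar\Phi) = 1-\tau(\Phi)$ by a coupling of the initial seeds $Q^0\leftrightarrow P^0$ that swaps the $S$ and $U$ components and leaves $*$ intact; the inductive argument in that proposition propagates the swap through (\ref{eq:barQ1})--(\ref{eq:Q5}) and through (\ref{eq:Wx1})--(\ref{eq:Wx*}), yielding $Z_x(1) = W_x(0)$ and hence $\Pr(Z_x(1) > Z_x(0)) = 1 - \Pr(W_x(1) > W_x(0))$. Since uniform initialization is symmetric under this swap, the marginal distribution of $\tau$ on $\bar\Phi$ equals the reflected distribution on $\Phi$, which is the content of balance.

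For (iii), I would check the elementary inequality $2^{K-2}\ln 2 > (1+\epsilon)\,2^{K-1}\ln^2 K/K$, which simplifies to $K\ln 2 > 2(1+\epsilon)\ln^2 K$. For any fixed $\epsilon>0$ this holds for all sufficiently large $K$, say $K\ge K_0(\epsilon)$, since $\ln^2 K / K \to 0$. Applying Theorem~\ref{theorem:MainResultNAE-$K$-SAT} to the balanced rule $\tau$ of even radius $r'$ then yields $\Pr(\sigma_{\bfPhi(n,dn),\tau} \in \mathbb{SAT}(\bfPhi(n,dn))) \to 0$, which is the desired conclusion. There is no real obstacle here beyond the locality/balance bookkeeping: the heavy lifting has been done in Theorem~\ref{theorem:MainResultNAE-$K$-SAT} and Proposition~\ref{prop:tauSPbalanced}, and the corollary is essentially a matter of matching definitions and comparing thresholds.
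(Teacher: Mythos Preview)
Your proposal is correct and follows precisely the paper's approach: the paper simply states that, having established SP-guided decimation is a $\tau$-\local\ algorithm and that the rule is balanced (Proposition~\ref{prop:tauSPbalanced}), ``Theorem~\ref{theorem:MainResultNAE-$K$-SAT} then becomes applicable and we conclude'' the corollary. Your three-step breakdown (locality, balance via Proposition~\ref{prop:tauSPbalanced}, and the elementary threshold comparison $K\ln 2 > 2(1+\epsilon)\ln^2 K$ for large $K$) makes explicit exactly the verification the paper leaves implicit.
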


\section{Local algorithms and long-range independence}\label{section:LongRangeIndependence}
In this section we obtain some preliminary results needed for the proof of our main result, Theorem~\ref{theorem:MainResultNAE-$K$-SAT}.
Specifically we prove two structural results about the $\tau$-\local\ algorithm for a local rule $\tau$.

The first result is simple to state - we show that balanced local rules
lead to unbiased decisions for {\em every} non-reduced NAE-$K$-SAT
instance: specifically the marginal probability that a variable
is set to $1$ is $1/2$. More generally we show that the probability
that a variable is set to $1$ in any reduced or non-reduced instance
$\Phi$ equals the probability that the same
variable is set to $0$ in the complementary instance $\bar\Phi$.
(See Lemma~\ref{lemma:equiprob}.) This lemma later allows us to find satisfying
assignments
with small overlap in random instances $\Phi(n,dn)$.

Next,
we consider the ``influence'' of a decision $\sigma(x_i)
\in \{0,1\}$ and ask how many other variables are affected by this
decision.
In particular, we show that the decisions $\sigma$ assigned
to a pair of fixed variables $x_i$ and $x_j$ are asymptotically
independent as $n\rightarrow\infty$. Namely, the decisions exhibit
a long-range independence. Such a long-range independence
is not a priori obvious, since setting a value of a variable $x_i$
can have a downstream implications for setting variables $x_j, j\ge i$.
We will show, however, that the chain of implications, appropriately
defined is typically
short. Definition~\ref{defn:IR}
and Proposition~\ref{prop:MaximumIR} formalize these claims.

In what follows, we first introduce some notation that makes
the decisions of the randomized algorithm more formal and precise.
We then prove the two main claims above in the following subsections.

\subsection{Formalizing random choices of a $\tau$-\local\ algorithm}

The $\tau$-\local\ algorithm described in the previous section
is based on the ordering of the variables $x_i$, since the values
$\sigma(x_i)$ are set in the order $i=1,2,\ldots,n$.
In the case of the random NAE-$K$-SAT formula $\bfPhi(n,dn)$,
due to symmetry we may assume, without the loss of generality, that
the ordering is achieved by assigning random i.i.d. labels chosen uniformly from $[0,1]$
and using order statistics for ordering of variables. (This is equivalent
to renaming the variables at random and this renaming will be
convenient for us.)
Specifically, let $\bfZ=(Z_i, 1\le i\le n)$ be the i.i.d. sequence of random variables with uniform in $[0,1]$
distribution. Let $\pi:[n]\rightarrow [n]$ be the permutation induced by the order statistics of $\bfZ$. Namely $Z_{\pi(1)}>Z_{\pi(2)}>\cdots> Z_{\pi(n)}$.
We now assume that when the $\tau$-\local\ algorithm is performed, the first variable selected is $x_{\pi(1)}$ (as opposed to $x_1$), the second variable selected
is $x_{\pi(2)}$ (as opposed to $x_2$), etc. Namely, we assume that $\tau$-\local\ algorithm performed on a random instance of the NAE-$K$-SAT problem
$\bfPhi(n,dn)$ is conducted according to this ordering.

To facilitate the randomization involved in selecting randomized decisions based on the $\tau$ rule, consider
another i.i.d. sequence $\bfU=(U_i, 1\le i\le n)$ of random variables with the uniform in $[0,1]$ distribution,
which is independent from the randomness of the instance $\bfPhi$ and
sequence $\bfZ$. The purpose of the sequence is to serve as random seeds for the decision $\sigma(x_i)$ based on $\tau$. Specifically, when the value $\sigma(x_i)$ associated
with variable $x_i$ is determined, it is done so according to the rule $\sigma(x_i)=1$ if $U_i<\tau(B(x_i,r))$ and $\sigma(x_i)=0$ otherwise,
where $B(x_i,r)=B_{\Phi_{i-1}}(x_i,r)$
is the reduced NAE-$K$-SAT instance rooted at $x_i$, observed at a time when the decision for $x_i$ needs to be made. Namely, the $\tau$-\local\ algorithm is faithfully executed.
Conditioned on $\bfZ,\bfU$ and $\bfPhi$, the output $\sigma:[n]\rightarrow \{0,1\}$ is uniquely determined. We denote by $\sigma_{\Phi,\bfz,\bfu}(x_i), 1\le i\le n$
the output of the $\tau$-\local\ algorithm conditioned on the realizations $\Phi,\bfz,\bfu$ of the random instance $\bfPhi(n,dn)$, vector $\bfZ$ and vector $\bfU$, respectively.
Similarly, we denote by $B_{\Phi,\bfz,\bfu}(x_i,r), 1\le i\le n$ the (possibly) reduced NAE-$K$-SAT instance corresponding to the $r$-depth neighborhood of variable $x_i$
at the time when the value of $x_i$ is determined by the $\tau$-\local\ algorithm. In particular, $\sigma_{\Phi,\bfz,\bfu}(x_i)=1$ if
$u_i\in [0,\tau(B_{\Phi,\bfz,\bfu}(x_i,r))]$ and $\sigma_{\Phi,\bfz,\bfu}(x_i)=0$ if $u_i\in (\tau(B_{\Phi,\bfz,\bfu}(x_i,r)),1]$.

\subsection{Implications of balance}

We now establish the following implication of the the Definition~\ref{defn:balance} of balanced local rules.
\begin{lemma}\label{lemma:equiprob}
For every formula $\Phi$,  and vectors $\bfz,\bfu$, the following identities hold for every variable $x_i, 1\le i\le n$:
\begin{align}
B_{\Phi,\bfz,\bar\bfu}(x_i,r)&=\bar B_{\Phi,\bfz,\bfu}(x_i,r), \label{eq:B=barB}\\
\sigma_{\Phi,\bfz,\bar\bfu}(x_i)&=1-\sigma_{\Phi,\bfz,\bfu}(x_i)\label{sigma=1-sigma},
\end{align}
where $\bar\bfu$ is defined by $\bar u_i=1-u_i, 1\le i\le n$. As a result, when $\bfU$ is a vector of i.i.d. random variables chosen uniformly from $[0,1]$,
for $\Phi$ and $\bfz$, the following holds for all $i=1,2,\ldots,n$:
\begin{align}\label{eq:Probhalf}
\pr(\sigma_{\Phi,\bfz,\bfU}(x_i)=0)=1/2.
\end{align}
\end{lemma}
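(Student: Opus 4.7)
The plan is to prove (\ref{eq:B=barB}) and (\ref{sigma=1-sigma}) simultaneously by induction on the order $\pi(1), \pi(2), \ldots, \pi(n)$ in which variables are processed by the $\tau$-\local\ algorithm, and then derive (\ref{eq:Probhalf}) from the fact that the coordinatewise complement $\bar\bfU$ of a uniform vector $\bfU$ on $[0,1]^n$ has the same distribution as $\bfU$ itself.

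For the base case, before any variable is set the current formula is the non-reduced instance $\Phi$, whose every clause has sign $0$. Since the complementation operation only flips signs and signs of $0$ are fixed, $\bar\Phi=\Phi$, and in particular $B_{\Phi,\bfz,\bar\bfu}(x_{\pi(1)},r)=B_{\Phi,\bfz,\bfu}(x_{\pi(1)},r)=\bar B_{\Phi,\bfz,\bfu}(x_{\pi(1)},r)$, proving (\ref{eq:B=barB}) for $x_{\pi(1)}$. By balance, $\tau$ applied to this self-complementary neighborhood equals $1/2$, so the decision rule gives $\sigma(x_{\pi(1)})=1$ iff $u_{\pi(1)}\le 1/2$, and correspondingly $\sigma_{\bar\bfu}(x_{\pi(1)})=1$ iff $1-u_{\pi(1)}\le 1/2$. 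This yields (\ref{sigma=1-sigma}) at $x_{\pi(1)}$ (ignoring the measure-zero boundary tie, which we may handle by a standard tie-breaking convention).

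For the inductive step, suppose the identities hold for $x_{\pi(1)},\ldots,x_{\pi(k-1)}$, so that the two runs have assigned complementary values to these variables. The reduced formula $\Phi_{k-1}$ in the first run and the corresponding $\Phi'_{k-1}$ in the second are obtained from the same non-reduced $\Phi$ by fixing complementary partial assignments on the same set of variables. Observation~\ref{observation:SymmetryReduced} then gives $\Phi'_{k-1}=\bar\Phi_{k-1}$, and restricting to the depth-$r$ factor-graph neighborhood of $x_{\pi(k)}$ yields $B_{\Phi,\bfz,\bar\bfu}(x_{\pi(k)},r)=\bar B_{\Phi,\bfz,\bfu}(x_{\pi(k)},r)$, which is (\ref{eq:B=barB}) at step $k$. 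Applying balance, $\tau(\bar B_{\Phi,\bfz,\bfu}(x_{\pi(k)},r))=1-\tau(B_{\Phi,\bfz,\bfu}(x_{\pi(k)},r))$, and the thresholding rule with seed $\bar u_{\pi(k)}=1-u_{\pi(k)}$ versus $u_{\pi(k)}$ flips the decision, giving (\ref{sigma=1-sigma}) at step $k$. This completes the induction.

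For (\ref{eq:Probhalf}), observe that since each $U_i$ is uniform on $[0,1]$, the vector $\bar\bfU$ equals $\bfU$ in distribution. Hence, conditional on $\Phi$ and $\bfz$,
\begin{equation*}
\pr(\sigma_{\Phi,\bfz,\bfU}(x_i)=0)=\pr(\sigma_{\Phi,\bfz,\bar\bfU}(x_i)=0)=\pr(1-\sigma_{\Phi,\bfz,\bfU}(x_i)=0)=\pr(\sigma_{\Phi,\bfz,\bfU}(x_i)=1),
\end{equation*}
where the middle equality uses (\ref{sigma=1-sigma}). Since the two probabilities sum to one, each equals $1/2$. The only real subtlety in the whole argument is the measure-zero event $u_{\pi(k)}=\tau(B_{\Phi,\bfz,\bfu}(x_{\pi(k)},r))$ in the deterministic statement (\ref{sigma=1-sigma}); but this is irrelevant once we pass to probabilities under the continuous distribution of $\bfU$, so it will not be an obstacle.
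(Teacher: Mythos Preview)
Your proof is correct and follows essentially the same inductive structure as the paper's own argument. The only difference is cosmetic: where the paper verifies by an explicit case analysis on clause signs that setting $x_{\pi(i)}$ to complementary values keeps the two reduced formulas complementary, you invoke Observation~\ref{observation:SymmetryReduced} to get this in one line, and you also flag the measure-zero tie $u_{\pi(k)}=\tau(\cdot)$ that the paper silently ignores.
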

Note, that the randomness in the probability above is with respect to $\bfU$ only and the claim holds for every formula $\Phi$ and every vector $\bfz$.
\begin{proof}
We prove the claim by induction on $x_{\pi(1)},x_{\pi(2)},\ldots,x_{\pi(n)}$, where $\pi$ is the permutation generated by $\bfz$,
that is $z_{\pi(1)}>z_{\pi(2)}>\cdots>z_{\pi(n)}$.
Specifically, we will show by induction that for every $i=0,1,2,\ldots,n$, just before the value of variable $x_{\pi(i)}$ is determined,
the identity (\ref{sigma=1-sigma}) holds for all variables $x_{\pi(j)}, j\le i-1$ (namely for variables for which the value is already determined at time $i$),
and the identity (\ref{eq:B=barB}) in fact holds for all neighborhoods $B_{\Phi,\bfz,\bfu}(x_{\pi(k)},r), i\le k\le n$ and $B_{\Phi,\bfz,\bar\bfu}(x_{\pi(k)},r), i\le k\le n$,
and not just for $B_{\Phi,\bfz,\bfu}(x_{\pi(i)},r)$ and $B_{\Phi,\bfz,\bar\bfu}(x_{\pi(i)},r)$.

For the base of the induction corresponding to $i=1$, no variables are set yet and all the neighborhoods
$B_{\Phi,\bfz,\bfu}(x_k,r),B_{\Phi,\bfz,\bar\bfu}(x_k,r), 1\le k\le n$ correspond to non-reduced instances, for which by our convention, its symmetric complement
is the instance itself. Namely $B_{\Phi,\bfz,\bar\bfu}(x_k,r)=\bar B_{\Phi,\bfz,\bar\bfu}(x_k,r)=B_{\Phi,\bfz,\bfu}(x_k,r)$, and thus (\ref{eq:B=barB}) is verified.

Fix $i\ge 1$ and assume now the inductive hypothesis holds for $j\le i$. In particular, the values $\sigma(x_{\pi(j)})$ are determined for $j=1,\ldots,i-1$
under $\bfu$ and $\bar\bfu$. Now consider the step of assigning the value of $x_{\pi(i)}$. We have $\sigma_{\Phi,\bfz,\bfu}(x_{\pi(i)})=1$
iff $u_{\pi(i)}\le \tau(B_{\Phi,\bfz,\bfu}(x_{\pi(i)},r))$ and $\sigma_{\Phi,\bfz,\bar\bfu}(x_{\pi(i)})=1$
iff $\bar u_{\pi(i)}\le \tau(B_{\Phi,\bfz,\bar\bfu}(x_{\pi(i)},r))$.  By the inductive assumption we have that
$B_{\Phi,\bfz,\bar\bfu}(x_{\pi(i)},r)=\bar B_{\Phi,\bfz,\bfu}(x_{\pi(i)},r)$. Since $\tau$ is balanced, we have
$\tau(\bar B_{\Phi,\bfz,\bfu}(x_{\pi(i)},r))=1-\tau(B_{\Phi,\bfz,\bfu}(x_{\pi(i)},r))$. Since $\bar u=1-u$, we conclude
that $\sigma_{\Phi,\bfz,\bfu}(x_{\pi(i)})=1$ iff $\sigma_{\Phi,\bfz,\bar\bfu}(x_{\pi(i)})=0$ and vice verse. Namely,
$\sigma_{\Phi,\bfz,\bfu}(x_{\pi(i)})=1-\sigma_{\Phi,\bfz,\bar\bfu}(x_{\pi(i)})$ and identity (\ref{sigma=1-sigma}) is verified.

It remains to show that identity (\ref{eq:B=barB}) still holds for all variables after the value $\sigma(x_{\pi(i)})$ is determined. All neighborhoods
$B(x_k,r)$ which do not contain $x_{\pi(i)}$ are not affected by fixing the value of $x_{\pi(i)}$ and thus the identity holds by the inductive assumption.
Suppose $B(x_k,r)$ contains $x_{\pi(i)}$. This means this neighborhood contains one or several clauses  which contains $x_{\pi(i)}$.
Fix any such clause $C$. If this clause was unsigned under $\bfu$, then by the inductive assumption it was also unsigned under $\bar\bfu$ (as the instances
under $\bfu$ and $\bar\bfu$ are complements of each other).
The clause then becomes signed after fixing the value of $x_{\pi(i)}$, and,  furthermore,  the signs will be opposite under $\bfu$ and $\bar\bfu$,
since (\ref{sigma=1-sigma}) holds for $x_{\pi(i)}$ as we have just established.

Now suppose the clause was signed $+$ under $\bfu$. Then again by the inductive
assumption it was signed $-$ under $\bar\bfu$. In this case if the assignment $\sigma_{\Phi,\bfz,\bfu}(x_{\pi(i)})$ satisfies $C$, then the clause
remains signed $+$ after setting the value of $x_{\pi(i)}$.
At the same time this means that $\sigma_{\Phi,\bfz,\bar\bfu}(x_{\pi(i)})=1-\sigma_{\Phi,\bfz,\bfu}(x_{\pi(i)})$ does not satisfy
$C$ and the clause remains signed $-$ after setting the value of $x_{\pi(i)}$. In both cases the variable $x_{\pi(i)}$ is deleted and
the identity (\ref{eq:B=barB}) still holds.
On the other hand if $\sigma_{\Phi,\bfz,\bfu}(x_{\pi(i)})$ does not satisfy $C$
when $\bfu$ is used, then (since it was signed $+$) the clause $C$ is now satisfied and disappears from the formula. But at the same time
this means $\sigma_{\Phi,\bfz,\bar\bfu}(x_{\pi(i)})$  satisfies $C$, since it was signed $-$ under $\bar\bfu$, and therefore $C$ is satisfied again
and disappears from the formula. The variable $x_{\pi(i)}$ is deleted in both cases and again (\ref{eq:B=barB}) is verified.

The case when clause $C$ is signed $-$ under $\bfu$ and signed $+$ under $\bar\bfu$ is considered similarly. Finally, suppose $\sigma_{\Phi,\bfz,\bfu}(x_{\pi(i)})$
violates a clause $C$ containing $x_{\pi(i)}$. This means that $C$ contains only this variable when setting this variable to $\sigma_{\Phi,\bfz,\bfu}(x_{\pi(i)})$.
By inductive assumption we see that the same is true under $\bar\bfu$. In both cases both the variable and clause are removed from the formula.
This completes the proof of the inductive step.

Finally, since the distribution of $\bfU$ and $\bar\bfU$ is identical for i.i.d. sequences chosen uniformly at random from $[0,1]$, we obtain (\ref{eq:Probhalf}).
\end{proof}

\subsection{Influence ranges}

We now define the notion of influence (which depends on the
formula $\bfPhi(n,dn)$ and ordering $\bfZ$, but not on random
choices of the $\tau$-\local\ algorithm).
We introduce the following relationship between the variables
$x_1,\ldots,x_n$ of our formula.

\begin{definition}
\label{defn:IR}
Given a random formula $\bfPhi(n,dn)$ and random sequence $\bfZ$
we say that $x_i$ \emph{influences} $x_j$ if either $x_j=x_i$ or in the underlying node-to-node graph $\G=\G(\bfPhi(n,dn))$ there exists
a sequence of nodes $y_0,y_1,\ldots,y_t\in \{x_1,\ldots,x_n\}$ with the following properties:
\begin{enumerate}
\item[(i)] $y_0=x_i$ and $y_t=x_j$.
\item[(ii)] $y_l$ and $y_{l+1}$ are connected by a path of length at most $r$ in graph $\G$ for all $l=0,1,\ldots,t-1$.
\item[(iii)] $Z_{y_{l-1}}>Z_{y_l}$ for $l=1,2,\ldots,t$. In particular, $Z_{x_i}>Z_{x_j}$.
\end{enumerate}
In this case we write $x_i\rightsquigarrow x_j$. We denote by $\mathcal{IR}_{x_i}$ the set of variables $x_j$ influenced by $x_i$
and call it \emph{influence range} of $x_i$.
\end{definition}

Note that indeed the randomness underlying the sets $\mathcal{IR}_{x_i}, 1\le i\le n$ as well as the relationship $\rightsquigarrow$ is the function of the
randomness of the formula $\bfPhi(n,dn)$ and vector $\bfZ$, but is independent from the random vector $\bfU$.

While the definition above is sound for every constant $r>0$, we will apply it in the case where $r$ is the parameter appearing in the context of $\tau$-\local\ algorithm.
Namely, in the context of the $\tau$ function  defined the set of rooted instances $\mathcal{SAT}_r$ introduced above.
In this case the notion of influence range is justified by the following observation.
\begin{prop}\label{prop:InfluenceResistance}
Given realizations $\Phi$ and $\bfz$ of the random formula $\bfPhi(n,dn)$ and random ordering $\bfZ$,
suppose $\bfu=(u_i,1\le i\le n)$ and $\bfu'=(u_i', 1\le i\le n)$ are such that $u_{i_0}=u'_{i_0}$ and $u_i=u_i'$ for all $i\ne i_0$, for some fixed index $i_0$.
Then $\sigma_{\Phi,\bfz,\bfu}(x)=\sigma_{\Phi,\bfz,\bfu'}(x)$ for every $x\notin\mathcal{IR}_{i_0}$.
That is, changing the values of $\bfu$ at $i_0$ may impact the decisions associated with only variables $x$ in $\mathcal{IR}_{x_{i_0}}$
\end{prop}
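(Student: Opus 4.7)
I plan to argue by induction on the processing order $\pi(1),\pi(2),\ldots,\pi(n)$ induced by $\bfz$ (so $z_{\pi(1)}>z_{\pi(2)}>\cdots>z_{\pi(n)}$), maintaining a joint invariant about decisions made so far and about reduced neighborhoods of variables still to be processed. Specifically, for each $k\in\{0,1,\ldots,n\}$, at the moment the algorithm is about to process $x_{\pi(k+1)}$, I will maintain: (a) for every $j\le k$ with $x_{\pi(j)}\notin\mathcal{IR}_{x_{i_0}}$, $\sigma_{\Phi,\bfz,\bfu}(x_{\pi(j)})=\sigma_{\Phi,\bfz,\bfu'}(x_{\pi(j)})$; and (b) for every unprocessed $y\in\{x_{\pi(k+1)},\ldots,x_{\pi(n)}\}$ whose current reduced neighborhoods satisfy $B_{\Phi,\bfz,\bfu}(y,r)\neq B_{\Phi,\bfz,\bfu'}(y,r)$, we have $y\in\mathcal{IR}_{x_{i_0}}$. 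The case $k=n$ of (a) is exactly the proposition.

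The base case $k=0$ is immediate: no variables have been set yet, so both reduced instances equal $\Phi$ and all depth-$r$ neighborhoods coincide, making (b) vacuous and (a) empty.

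For the inductive step, suppose the invariants hold at step $k$ and consider processing $x_{\pi(k+1)}$. By invariant (b) either $B_{\Phi,\bfz,\bfu}(x_{\pi(k+1)},r)=B_{\Phi,\bfz,\bfu'}(x_{\pi(k+1)},r)$, or $x_{\pi(k+1)}\in\mathcal{IR}_{x_{i_0}}$. In the first situation the two invocations of $\tau$ return the same value, so the only way the decisions at $x_{\pi(k+1)}$ could diverge is if $\pi(k+1)=i_0$ (the unique coordinate on which $\bfu$ and $\bfu'$ differ), in which case $x_{\pi(k+1)}=x_{i_0}\in\mathcal{IR}_{x_{i_0}}$. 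Therefore whenever $\sigma_{\Phi,\bfz,\bfu}(x_{\pi(k+1)})\neq\sigma_{\Phi,\bfz,\bfu'}(x_{\pi(k+1)})$ we automatically have $x_{\pi(k+1)}\in\mathcal{IR}_{x_{i_0}}$, and part (a) is preserved. For part (b), consider any $y=x_{\pi(\ell)}$ with $\ell>k+1$ whose reduced neighborhoods disagree at the start of step $k+2$. If they already disagreed at step $k+1$, invariant (b) gives $y\in\mathcal{IR}_{x_{i_0}}$ directly. Otherwise the disagreement arose from processing $x_{\pi(k+1)}$, which requires both that the decision $\sigma(x_{\pi(k+1)})$ differed under $\bfu$ versus $\bfu'$ (since equal decisions reduce or decorate the common clauses identically) and that $x_{\pi(k+1)}$ lies within graph distance $r$ of $y$ in $\G$ (so that fixing $x_{\pi(k+1)}$ touches some clause in $B(y,r)$). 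By the previous paragraph the differing decision forces $x_{\pi(k+1)}\in\mathcal{IR}_{x_{i_0}}$, i.e., there is a chain $x_{i_0}=y_0,y_1,\ldots,y_t=x_{\pi(k+1)}$ satisfying Definition~\ref{defn:IR}. Appending $y_{t+1}=y$ extends this chain because $y$ is within distance $r$ of $y_t$ in $\G$ and $z_y<z_{y_t}$ (since $y$ is processed strictly later in the $\bfz$-ordering), so $y\in\mathcal{IR}_{x_{i_0}}$, closing the induction.

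The main obstacle is purely structural bookkeeping: one must verify that appending a newly-decided variable to an existing influence chain preserves the defining properties of $\mathcal{IR}$, and that equal decisions at a common variable cause no change to the reduced neighborhoods of its graph-neighbors. No probabilistic estimates or clever identities are required; the argument is entirely combinatorial and local.
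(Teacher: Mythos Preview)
Your proof is correct and is essentially the same argument as the paper's, just organized as a forward induction on the processing order with explicit invariants, whereas the paper traces backward from an affected variable to $x_{i_0}$ through a chain of earlier-processed neighbors. The key observation in both is identical: a variable's decision can change only if either it is $x_{i_0}$ itself or some variable in its radius-$r$ neighborhood that was processed earlier already had a different decision, which unwinds to a valid $\rightsquigarrow$-chain from $x_{i_0}$.
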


\begin{proof}
Given a variable $x_i, i\ne i_0$, in order for its decision $\sigma_{\Phi,\bfz,\cdot}(x_i)$ to be affected by switching from $\bfu$ to $\bfu'$,
there must exist a variable $x_{i_1}$ with distance at most $r$ (with respect to the node-to-node graph $\G=\G(\Phi)$) from $x_i$
such that $z_{x_{i_1}}>z_{x_i}$ and such that the decision for $x_{i_1}$ is affected by the switch. In its turn such a variable exist
if either $i_1=i_0, z_{i_1}=z_{i_0}>z_{i}$ and $x_{i_0}\in B(x_i,r)$ (in particular $x_{i_0}\rightsquigarrow x_i$),
or if there exists $x_{i_2}\in B(x_{i_1},r)$ such that
$z_{i_2}>z_{i_1}$ and $x_{i_2}$ is affected by the switch. In this case again $x_{i_2}\rightsquigarrow x_i$.
Continuing, we see that in order for $x_i$ to be affected by the switch, it must be the case that $x_{i_0}\rightsquigarrow x_i$.
\end{proof}

We now obtain a probabilistic bound on the size of a largest in cardinality of the influence range classes $\mathcal{IR}_{x_i}, 1\le i\le n$.
\begin{prop}\label{prop:MaximumIR}
The following holds
\begin{align*}
\lim_{n\rightarrow\infty}\pr(\max_{1\le i\le n}|\mathcal{IR}_{x_i}|\ge n^{1\over 3})=0.
\end{align*}
\end{prop}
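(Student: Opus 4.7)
The plan is to establish a bounded fourth moment $\mathbb{E}_{\bfPhi, \bfZ}[|\mathcal{IR}_{x_i}|^4] \le A$ for some constant $A = A(d, K, r)$ independent of $n$ and $i$, and then to invoke Markov's inequality together with a union bound over the $n$ variables. Indeed, granted such an $A$, one has $\pr(|\mathcal{IR}_{x_i}| \ge n^{1/3}) \le A/n^{4/3}$ for each $i$, so summing over $i = 1, \ldots, n$ yields $\pr(\max_i |\mathcal{IR}_{x_i}| \ge n^{1/3}) \le A/n^{1/3} \to 0$. Any exponent strictly greater than $1/4$ would work just as well; the value $1/3$ in the statement is a convenient choice.

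To control the fourth moment I overcount $|\mathcal{IR}_{x_i}|$ by the number of \emph{witness sequences} emanating from $x_i$, namely sequences of distinct variables $y_0 = x_i, y_1, \ldots, y_t$ satisfying $y_{l+1} \in B_{\G}(y_l, r)$ and $Z_{y_0} > Z_{y_1} > \cdots > Z_{y_t}$. Three facts combine to give the bound. First, for any fixed instance $\bfPhi$ and any fixed sequence of $t+1$ distinct variables, the probability over $\bfZ$ that the $Z$-values strictly decrease along the sequence is exactly $1/(t+1)!$. Second, for any fixed $4$-tuple of witness sequences sharing only the root $x_i$, with individual lengths $\ell_1, \ldots, \ell_4$ and $k = 1 + \sum_j \ell_j$ total vertices, the joint probability that all four are $Z$-decreasing equals $1/(k \prod_j \ell_j!)$, by the hook-length formula for the number of linear extensions of the corresponding tree-shaped poset. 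Third, for a random NAE-$K$-SAT instance with constant density $d$, the expected number of walks of length $s$ in $\G$ from a fixed variable is at most $C_0^s$ with $C_0 := dK(K-1) + 1$, since each step of a walk extends on average by the expected degree. Combining these and summing over compositions $\ell_1 + \cdots + \ell_4 = L$ via the multinomial identity $\sum 1/\prod_j \ell_j! = 4^L/L!$, the ``disjoint-at-middle'' contribution to $\mathbb{E}|\mathcal{IR}_{x_i}|^4$ is at most $\sum_L (4 C_0^r)^L/((L+1) L!) \le e^{4 C_0^r}$, a constant depending only on $d, K$, and $r$.

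The main obstacle is the combinatorial bookkeeping for $4$-tuples of witness sequences that share intermediate vertices beyond the root, where the joint $Z$-decreasing probability must be computed as a ratio of linear extensions for a more general tree-structured poset and where the counts of such configurations are correlated with the probabilities in a non-trivial way. I would handle these either by an induction on the number of shared vertices (each additional identification both reduces the number of configurations and modifies the $Z$-probability in a compatible way), or, as a cleaner alternative, by dominating the decreasing-$Z$ exploration from $x_i$ by a suitable subcritical branching process whose offspring distribution encodes both the $\G$-ball structure and the $Z$-ranking constraint, and then invoking a standard large-deviation bound on the total progeny of such a process.
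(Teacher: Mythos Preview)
Your route is genuinely different from the paper's, and the target statement ($\E[|\mathcal{IR}_{x_i}|^4]$ bounded by a constant in $d,K,r$) is correct. But your proposal stops exactly where the real work starts. You carry out the computation only for $4$-tuples of witness sequences that are vertex-disjoint away from the root, and for the general case you offer two plans without executing either. The induction on shared vertices is not a triviality: an identification saves roughly a factor $C_0^r$ in the configuration count but changes the linear-extension probability by a factor that depends on \emph{where} in the poset the merge occurs, and this trade-off is not uniform across merge locations. The branching-process domination is likewise not immediate, because the $Z$-ranking constraint makes the offspring law at each node depend on that node's $Z$-value, which is itself conditioned by the entire chain of earlier comparisons; one has to set up the coupling with some care before any standard total-progeny bound applies. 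Both routes can be pushed through, but neither is the one-liner your write-up suggests. A secondary point: your walk-count bound $\E[\#\text{walks of length }s]\le C_0^s$ is asserted by ``each step extends on average by the expected degree,'' which is not a proof in the NAE-$K$-SAT model where edges of $\G$ are positively correlated (each clause induces a $K$-clique); the bound is true with a corrected constant, but it needs an argument.

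The paper sidesteps all of this by a two-step spatial argument rather than a moment computation. First it shows, via a generating-function estimate on a dominating Galton--Watson tree, that for $t=\delta\ln n$ with $\delta$ small one has $|B_\G(x,t)|\le n^{\epsilon}$ for some $\epsilon<1/3$, with failure probability at most $1/n^2$. Second, it observes that if $\mathcal{IR}_{x_i}$ escapes $B_\G(x_i,t)$ then some witness sequence from $x_i$ must have length of order $t/r$; the expected number of such candidate sequences is at most $|B_\G(x_i,t)|\cdot r^{t}/(t+1)!$, which for $t\asymp\ln n$ is $n^{\epsilon}\cdot n^{-\Omega(\ln\ln n)}=o(1/n)$. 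A union bound over $i$ then gives $\mathcal{IR}_{x_i}\subset B_\G(x_i,t)$ for all $i$ simultaneously, hence $|\mathcal{IR}_{x_i}|\le n^{\epsilon}<n^{1/3}$. This uses only first-moment reasoning, avoids the overlap bookkeeping entirely, and in fact delivers any exponent in $(0,1)$, not just those above $1/4$.
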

The choice of exponent $1/3$ is somewhat arbitrary here. In fact the bound holds for any exponent in $(0,1)$, and for our purposes,
as we are about to see in Section~\ref{section:ProofOfMainTheorem}, any constant in the range $(0,1/2)$ suffices.

\begin{proof}
Fix a variable $x$ in $\bfPhi(n,dn)$. We first establish an upper bound on the number of variables in a neighborhood $B(x,t)$ of $x$ in
the node-to-node graph $\G(\bfPhi(n,dn))$ when $t$ is moderately growing.

\begin{lemma}\label{lemma:BoundBxr}
There exists $\delta>0$ and $\epsilon=\epsilon(\delta)<1/3$ such that for all sufficiently large  $n$
\begin{align*}
\pr(|B(x,t)|\ge n^{\epsilon})\le {1\over n^2},
\end{align*}
when $t\le \delta \ln n$.
\end{lemma}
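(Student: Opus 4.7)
The plan is to dominate the growth of $B(x,t)$ in $\G$ by an auxiliary branching-type process with constant mean offspring and then apply a polynomial-moment tail bound. Exploring $B(x,t)$ by BFS in $\G$, at each generation $s$ every one of the $m=dn$ clauses of the formula intersects the previously explored variable set $S_{s-1}=B(x,s-1)$ independently with probability at most $|S_{s-1}|K/n$, and each such clause contributes at most $K-1$ new variables to $S_s$. Consequently,
\begin{align*}
|S_s|\ \le\ |S_{s-1}|+(K-1)\cdot \mathrm{Bin}\!\bigl(dn,\ |S_{s-1}|K/n\bigr)
\end{align*}
in the stochastic order, so conditionally $\E[|S_s|\mid S_{s-1}]\le (1+\mu)|S_{s-1}|$ with $\mu=dK(K-1)$, and iterating gives $\E[|B(x,t)|]\le (1+\mu)^t$.

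Next I would bootstrap to higher moments using the elementary estimate $\E[\mathrm{Bin}(N,q)^p]\le C_p(Nq+1)^p$, valid for every integer $p\ge 1$ with a universal constant $C_p$. A straightforward induction on $s$, expanding the $p$-th power of $|S_{s-1}|+(K-1)\mathrm{Bin}(\cdot)$ and applying this inequality, yields
\begin{align*}
\E[|B(x,t)|^p]\ \le\ C'_p\,(1+\mu)^{pt}
\end{align*}
for every $p\ge 1$, with $C'_p$ depending only on $p$, $K$, and $d$.

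Finally, I would pick parameters and invoke Markov's inequality. Set $\delta=(7\ln(1+\mu))^{-1}$ and $\epsilon=2\delta\ln(1+\mu)=2/7<1/3$, and fix an integer $p$ with $p\,\delta\ln(1+\mu)\ge 3$. For $t\le \delta\ln n$ we have $(1+\mu)^{pt}\le n^{p\delta\ln(1+\mu)}$, hence
\begin{align*}
\pr(|B(x,t)|\ge n^\epsilon)\ \le\ \frac{\E[|B(x,t)|^p]}{n^{p\epsilon}}\ \le\ C'_p\,n^{p\delta\ln(1+\mu)-p\epsilon}\ =\ C'_p\,n^{-p\delta\ln(1+\mu)}\ \le\ n^{-2}
\end{align*}
for all sufficiently large $n$, as required. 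The main obstacle is the $p$-th moment bound: a simple second-moment argument yields only $n^{-O(\delta)}$ decay rather than the required $n^{-2}$, so the induction must be pushed through for a large enough constant $p$ while carefully tracking the variance contributions accumulating across generations.
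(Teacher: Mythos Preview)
Your approach differs genuinely from the paper's. The paper dominates $B(x,t)$ by a Poisson Galton--Watson process and controls each generation size $W_l$ via an \emph{exponential} moment: with $G(\theta)=\exp(\beta(\theta-1))$ and $\theta=1+(e\beta)^{-t}$, it shows inductively that $G^{(l)}(\theta)\le 1+(e\beta)^{-(t-l)}$, and Markov applied to $\theta^{W_l}$ then gives the much stronger tail $\exp(-n^{\epsilon/4})$. Your polynomial-moment route is more elementary and perfectly adequate for the stated $n^{-2}$ bound.

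There is, however, a gap in your moment estimate. The induction you describe does \emph{not} produce $\E[|B(x,t)|^p]\le C'_p(1+\mu)^{pt}$ with $C'_p$ independent of $t$. Expanding $(Z_{s-1}+(K-1)B)^p$ and inserting $\E[B^k\mid Z_{s-1}]\le C_k(dK\,Z_{s-1}+1)^k$ yields only $\E[Z_s^p\mid Z_{s-1}]\le D_p\,Z_{s-1}^p$ with
\[
D_p=\sum_{k=0}^p\binom{p}{k}C_k\bigl((K-1)(dK+1)\bigr)^k,
\]
and since $C_k>1$ for $k\ge 4$ (already for Poisson$(\lambda)$ one has $\E X^4>(\lambda+1)^4$ at $\lambda=2$), we get $D_p>(1+\mu)^p$ strictly. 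Iterating gives $\E[Z_t^p]\le D_p^{\,t}$, and $(D_p/(1+\mu)^p)^t$ cannot be absorbed into a $t$-free constant; hence your specific choice $\delta=(7\ln(1+\mu))^{-1}$, which is tied to $1+\mu$ rather than $D_p$, is unjustified. The repair is easy: either (i) keep the crude bound $D_p^{\,t}\le n^{\delta\ln D_p}$ and pick $\delta$ in terms of $\ln D_p$ (for example $p=12$, $\epsilon=1/4$, $\delta=1/(2\ln D_{12})$ already gives $\le n^{-5/2}$), or (ii) invoke the $L^p$-boundedness of the Galton--Watson martingale $Y_t/(1+\mu)^t$, valid because the offspring has all moments, which does deliver the tight rate but is not a ``straightforward induction.'' As a minor aside, ``independently'' in your domination step is not literally correct since $S_{s-1}$ is itself a function of all clauses; what is actually needed (and what the paper invokes) is the standard exploration coupling to a branching process, valid while the explored set has size $o(n)$.
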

From the proof below it will be clear that the bound $1/n^2$ is very crude and in fact a bound $\exp(-n^{\epsilon/5})$ can be established. But a cruder bound suffices for
our purposes.
\begin{proof}
It is well known that for small enough $\delta>0$ and $t=\delta\ln n$,
the $B(x,t)$ is distributed approximately as a Poisson branching process with the off-spring distribution
being Poisson with parameter $\beta\triangleq dK$. Furthermore, by increasing the number of clauses by $o(n)$ the Poisson branching process stochastically
dominates the distribution of $B(x,t)$. Thus we will obtain instead an upper bound on the number of off-springs in the $t$ generations of a Poisson branching process
with parameter $\beta$. Letting $W_l$ denote the size of the $l$-th generation, our goal is then to obtain a bound on $\sum_{l\le t}W_l$. We claim
that for some $\epsilon=\epsilon(\delta)$ satisfying $\epsilon(\delta)\rightarrow 0$ as $\delta\rightarrow 0$,
the following
upper bound holds for  each $W_l, l\le t=\delta \ln n$:
\begin{align}\label{eq:TailBranching}
\pr(W_l>n^{\epsilon/2})\le \exp(-n^{\epsilon/4}),
\end{align}
from which the claim of the lemma follows by a union bound. To establish this bound we rely on the following known representation of the probability generating
function of $W_l$. That is, let $G(\theta)=\E[\theta^{W_1}]$ for $\theta>0$, where $W_1$ has Poisson mean $\beta$ distribution. Then $G(\theta)=\exp(\beta \theta-\beta)$
and $\E[\theta^{W_l}]=G^{(l)}(\theta)$ - the $l$-th iterate of function $G(\theta)$. Now we let $\theta=1+{1\over (e\beta)^{t}}$. Define $\gamma_l=1/(e\beta)^l$.
We now obtain an upper bound on  $G^{(l)}(\theta)$. We have
\begin{align*}
G^{(1)}(\theta)=\exp(\beta\theta-\beta)=\exp(\beta \gamma_t)\le 1+\gamma_{t-1},
\end{align*}
where we have used $\beta\gamma_t<1$ and inequality $e^z\le 1+ez$ for $z\le 1$. Then
\begin{align*}
G^{(2)}(\theta)=\exp(\beta G^{(1)}(\theta)-\beta)\le \exp(\beta \gamma_{t-1})\le 1+\gamma_{t-2},
\end{align*}
since $\beta\gamma_{t-1}<1$. Continuing, we obtain $G^{(l)}(\theta)\le 1+\gamma_{t-l}, 1\le l\le t$. Applying this bound
\begin{align*}
\pr(W_l\ge n^{\epsilon/2})&=\pr(\theta^{W_l}\ge \theta^{n^{\epsilon/2}})\\
&\le \theta^{-n^{\epsilon/2}}\E[\theta^{W_l}]\\
&\le \theta^{-n^{\epsilon/2}}(1+\gamma_{t-l})\\
&\le 2\theta^{-n^{\epsilon/2}}.
\end{align*}
Now
\begin{align*}
\theta^{-n^{\epsilon/2}}&=\exp(-n^{\epsilon/2}\ln(\theta)) \\
&=\exp(-n^{\epsilon/2}(\gamma_t+o(\gamma_t)).
\end{align*}
Now since $t=\delta \ln n$, then $\gamma_t=(e\beta)^{-t}=n^{-\ln(e\beta)\delta}$, implying the upper bound $\exp(-n^{\epsilon/4})$ for large enough $n$
when $\epsilon(\delta)>2\ln(e\beta)\delta$. This completes  the proof of the
bound (\ref{eq:TailBranching}) and of  lemma.
\end{proof}

Now we complete the proof of the Proposition. Applying union bound we have that that for every $\epsilon>0$, $|B(x_i,t)|\le n^{\epsilon})$
for all $i=1,\ldots,n$ with probability approaching unity as $n\rightarrow\infty$.
Given two variables $x_i$ and $x_j$ if $x_i\rightsquigarrow x_j$ and the distance in $\G(\bfPhi(n,dn))$ between
$x_i$ and $x_j$ is at least $t$, then there must exist $x_k\in B(x_i,t)\setminus B(x_i,t-1)$ such that $x_i\rightsquigarrow x_k$.
Given a sequence $y_0=x_i,y_1,\ldots,y_t=x_k$, with $x_k\in B(x_i,t)\setminus B(x_i,t-1)$, the probability of
an event $Z_{y_l}>Z_{y_{l+1}}, 0\le l\le t-1$ is $1/(t+1)!$. The total number of paths between $x_i$ and variables in $B(x_i,t)\setminus B(x_i,t-1)$
is trivially at most $B(x_i,t)$, since $B(x_i,t)$ is tree. Thus, conditioned on $B(x_i,t)$, the expected number of variables in $B(x_i,t)$
is at most $B(x_i,t)r^t/(t+1)!$, where the extra factor $r^t$ is due to choices of points $y_1,\ldots,y_t$ on a given path. When $t=\epsilon \ln$,
the expected number is $B(x_i,t)n^{-\Omega(\ln\ln n)}$. Applying the bound Lemma~\ref{lemma:BoundBxr} and a union bound over $x_i$ we obtain the result.
\end{proof}

\section{The clustering property of random NAE-$K$-SAT problem}\label{section:Clustering}
In this section we establish the clustering property of random NAE-$K$-SAT problem when $d$ is large enough (in terms of
$K$).
Recall that the random NAE-$K$-SAT formula $\bfPhi(n,dn)$ is satisfiable with probability approaching unity as $n\rightarrow\infty$,
when $d\le d_s$, where $d_s=2^{K-1}\ln 2-\ln 2/2-1/4-f(K)$ for some function $f(K)$ satisfying $\lim_{K\rightarrow\infty}f(K)=0$.
Recalling our notation for the set of satisfying assignment $\mathbb{SAT}(\Phi)$ of a formula $\Phi$, we have
$\pr(\mathbb{SAT}(\bfPhi(n,dn))\ne \emptyset)\rightarrow 1$ as $n\rightarrow\infty$ for every $d<d_s$.

The notion of ``clustering'' we consider is with respect to the Hamming
distance where
the Hamming distance between two assignments $\sigma^1$ and $\sigma^2$,
denoted $\rho(\sigma^1,\sigma^2)$,
is the number of variables $x_i$ with different assignments according to $\sigma^1$ and $\sigma^2$.
A simplistic notion of clustering might say that the ``satisfaction
graph'', the graph on
satisfying assignments where two assignments are deemed adjacent if the
Hamming distance between them is $o(n)$, has many connected components.
A condition in turn that implies this simple notion is that for every pair
of satisfying assignment $\sigma^1$ and $\sigma^2$, it is the case that
$\rho(\sigma^1,\sigma^2)/n \not\in (\beta-\eta,\beta)$ for some $\eta > 0$.
Note that this implies that for any pair of satisfying assignments
$\sigma^1$ and $\sigma^3$ with $\rho(\sigma^1,\sigma^3) > \beta n$ they must be
disconnected in the satisfaction graph, or else there will be a point $\sigma^2$ on
the path between them with $\rho(\sigma^1,\sigma^2)/n \in (\beta-\eta,\beta)$.

Working purely with this notion we only get a clustering result for very high
densities $d$, specifically for $d$ at least $d_s/2$. (We skip details since it is not needed for our main result).
To get a result for smaller
densities we work with a more sophisticated notion of clustering inspired by
\cite{rahman2014local}. Informally, this notion may be seen to allow paths between any pair
of vertices in the graph on satisfying assignments mentioned above. However
(again informally) it restricts the number of ``fundamentally'' different
paths to be small. Formally, we insist that there can not be many satisfying assigments
$\sigma^1,\ldots,\sigma^m$ with all pairwise distance being between
$(\beta-\eta)n$ and $\beta n$. We give the formal definition below.

Fix  $\beta,\eta\in [0,1]$ and a positive integer $m$. Given an NAE-$K$-SAT formula $\Phi$, denote by $\mathbb{SAT}(\Phi;\beta,\eta,m)$
the set of all $m$-tuples $(\sigma^1,\ldots,\sigma^m)$ of assignments $\sigma^j:\{x_1,\ldots,x_n\} \to \{0,1\}, ~1\le j\le m$ satisfying the following properties:
\begin{enumerate}
\item[(a)] Every $\sigma^j,~1\le j\le m$ is a satisfying assignment. Namely $\mathbb{SAT}(\Phi;\beta,\eta,m)\subset \mathbb{SAT}^m(\Phi)$.
\item[(b)] For every $j,k$, $(\beta-\eta)n\le \rho(\sigma^j,\sigma^k)\le \beta n$.
\end{enumerate}
In our application we will choose $\eta$ to be much smaller than $\beta$. In this case the pairwise distances $\rho(\sigma^j,\sigma^k)$ are nearly $\beta n$.
Thus we may think of the such an $m$-tuple as a set of $m$ equidistant points in the Hamming cube $\{0,1\}^n$ with pairwise distances nearly $\beta n$.

We now state the main result of this section. Intuitively it states for certain choices of $\beta,\eta$ and $m$ which depend on $K$ only, there
are no such $m$ equidistance points when $d$ crosses the threshold $\approx (d_s/K)\ln^2 K$.

\begin{theorem}\label{theorem:StrongClustering}
Fix arbitrary $0<\epsilon<1$, and let $\beta={\ln K\over K},\eta=\left({\ln K\over K}\right)^2$,  $m=\lceil{\epsilon^2 K\over \ln K}\rceil$.
Then there exists $K_0=K_0(\epsilon)$, such that for all $K\ge K_0$ and  $d\ge (1+\epsilon)2^{K-1}\ln^2 K/K$, the following holds
\begin{align*}
\lim_{n\rightarrow\infty}\pr\left(\mathbb{SAT}\left(\bfPhi(n,dn),\beta,\eta,m\right)=\emptyset\right)=1.
\end{align*}
\end{theorem}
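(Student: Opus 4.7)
The plan is to apply the first-moment method directly to
$$N := |\mathbb{SAT}(\bfPhi(n,dn);\beta,\eta,m)|,$$
showing $\E[N] \to 0$ as $n\to\infty$, from which the conclusion follows by Markov's inequality. The argument decomposes into an entropy-based counting bound on candidate $m$-tuples and a sharp per-clause upper bound on the NAE-satisfaction probability, coupled through a second-order Bonferroni inequality.

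For the counting, I would fix $\sigma^1$ (contributing a factor $2^n$) and place each of $\sigma^2,\ldots,\sigma^m$ in a Hamming ball of radius $\beta n$ around $\sigma^1$, contributing at most $\binom{n}{\beta n}^{m-1}$ (up to a polynomial factor in $n$ specifying the exact pairwise distances within the $\eta n$ slack). Since $H(\beta) = -\beta\ln\beta-(1-\beta)\ln(1-\beta)\approx \beta\ln(1/\beta)\approx (\ln K)^2/K$ for $\beta=\ln K/K$, and $m\approx \epsilon^2 K/\ln K$, the log-count is at most
$$n\ln 2 + (m-1)nH(\beta) + o(n) \;=\; n\ln 2 + \epsilon^2 n\ln K\,(1+o_K(1)).$$

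For the per-clause probability, fix a candidate tuple and let $E_j$ denote the event that a uniformly random $K$-clause is monochromatic under $\sigma^j$. NAE symmetry gives $\pr(E_j)=2^{1-K}$ exactly. To estimate $\pr(E_j\cap E_k)$ for $j\neq k$, condition on $E_j$ occurring with all literal values equal to $0$ under $\sigma^j$: the literal signs are then forced, and $E_k$ holds iff all $K$ chosen variables lie inside, or all lie outside, the set of size $\rho_{jk}=\rho(\sigma^j,\sigma^k)$ where $\sigma^j$ and $\sigma^k$ disagree. This yields
$$\pr(E_j\cap E_k) \;=\; 2^{1-K}\bigl((\rho_{jk}/n)^K + (1-\rho_{jk}/n)^K\bigr),$$
which for $\rho_{jk}/n\ge \beta-\eta$ (with $K\eta=o(1)$) is at most $(1+o_K(1))\,2^{1-K}/K$, using the crucial identity $(1-\beta)^K\approx e^{-K\beta}=1/K$. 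Second-order Bonferroni then gives
$$\pr\bigl(\textstyle\bigcup_j E_j\bigr) \;\ge\; m\cdot 2^{1-K} - \binom{m}{2}\cdot\tfrac{(1+o_K(1))\,2^{1-K}}{K} \;=\; m\cdot 2^{1-K}(1-o_K(1)),$$
since $m/K=\epsilon^2/\ln K=o_K(1)$. Consequently $\pr(\text{clause NAE-SAT by all})\le 1-(1-o_K(1))\,m\cdot 2^{1-K}$, and by independence of the $dn$ random clauses,
$$\ln\E[N] \;\le\; n\ln 2 + \epsilon^2 n\ln K(1+o_K(1)) - dn\cdot m\cdot 2^{1-K}(1-o_K(1)).$$
Substituting $d=(1+\epsilon)2^{K-1}\ln^2 K/K$ and $m=\lceil\epsilon^2 K/\ln K\rceil$ gives the clean identity $dn\cdot m\cdot 2^{1-K}=(1+\epsilon)\epsilon^2 n\ln K$, whence
$$\ln\E[N] \;\le\; n\ln 2 - \epsilon^3 n\ln K\,(1-o_K(1)),$$
which is strictly negative for all $K\ge K_0(\epsilon)$ with $\epsilon^3\ln K > 2\ln 2$, and in fact diverges to $-\infty$.

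The main obstacle is the delicate triple calibration of $\beta$, $m$, and $\eta$. The choice $\beta=\ln K/K$ is precisely the regime where $e^{-K\beta}=1/K$ makes the pair event $E_j\cap E_k$ small enough that the Bonferroni correction ($\Theta(m^2/K)$ relative to the first-order term) remains negligible; $m=\Theta(K/\ln K)$ is the largest value preserving $m/K=o_K(1)$; and $\eta=(\ln K/K)^2$ is small enough that $K\eta=o(1)$ does not spoil the $1/K$ asymptotics of $(1-\rho_{jk}/n)^K$. Any loosening beyond constants in these calibrations would either make the first-moment bound vacuous or force a density strictly larger than $(1+o_K(1))\,2^{K-1}\ln^2 K/K$. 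A secondary technical point is to verify that the per-clause calculation, which is cleanest under ``$K$ variables i.i.d.\ uniform with i.i.d.\ signs'', transfers to the actual model with $K$ distinct variables drawn without replacement; since $K$ is fixed and $n\to\infty$, this difference contributes only a multiplicative $1+o(1)$ factor and does not affect the leading-order exponents.
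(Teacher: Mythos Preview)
Your proposal is correct and follows essentially the same approach as the paper: a first-moment argument combining the entropy count $2^n\binom{n}{\beta n}^{m-1}$ with the second-order Bonferroni bound on the per-clause NAE-satisfaction probability, using exactly the same identities $(1-\beta+\eta)^K=(1+o_K(1))/K$ and $dm\,2^{1-K}=(1+\epsilon)\epsilon^2\ln K$ to arrive at $\ln\E[N]\le -\epsilon^3 n\ln K(1-o_K(1))$. Your discussion of the calibration of $\beta,\eta,m$ and the with/without-replacement issue is more explicit than the paper's, but the argument is the same.
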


\begin{proof}
The proof is based on the application of the first moment argument. We consider the expected number of $m$-tuples satisfying the conditions (a)-(b), and show that
this expectation converges to zero exponentially fast as $n\rightarrow\infty$. Applying Markov's inequality the result then will follow.

We begin by computing asymptotically the number of $m$-tuples $\sigma^1,\ldots,\sigma^m$ satisfying condition (b)  only. We have $2^n$ choices for $\sigma^1$. For any fixed choice
of $\sigma^1$, and any fixed $j=2,\ldots,m$ the number of choices for $\sigma^j$ is
\begin{align*}
\sum_{n(\beta-\eta)\le r\le n\beta}{n\choose r},
\end{align*}
by considering all the subsets of variables $x_1,\ldots,x_n$ where $\sigma^1$ and $\sigma^j$ disagree. Since this applies for every $j$, we obtain
the following upper bound on the number of $m$-tuples satisfying (b):
\begin{align*}
2^n\left(\sum_{n(\beta-\eta)\le r\le n\beta}{n\choose r}\right)^{m-1}.
\end{align*}
This bound is clearly loose, since it ignores the constraints on $\rho(\sigma^j,\sigma^k)$ for $j,k\ge 2$. Nevertheless, it suffices for our purposes.
We now obtain an asymptotic upper bound on this expression in terms of $\epsilon,K$ and $n$.

Using Stirling's approximation and since the function $-x\ln x$ is increasing in the range $x<e^{-1}$, and decreasing in the range $x>e^{-1}$, the expression is at most
\begin{align}\label{eq:mtupleCombinatorial}
\exp\left(n\ln 2-nm\beta\ln\beta-nm(1-\beta)\ln(1-\beta))+o(n)\right).
\end{align}
Here we use $\beta=\ln K/K<e^{-1}$, for sufficiently large $K$.
Further, the same asymptotics gives $-\ln\beta=\ln K+O_K(\ln\ln K)$, implying
\begin{align*}
-m\beta\ln\beta&=m\left(\beta\ln K+O_K(\ln\ln K)\right)\\
&=\epsilon^2\ln K+O_K(\ln\ln K).
\end{align*}
Next, we have for sufficiently large $K$
\begin{align*}
-m(1-\beta)\ln(1-\beta))&\le m((\ln K/K)+o_K(\ln K/K))\\
&\le \epsilon^2+o_K(1).
\end{align*}
We conclude that for sufficiently large $K$, the term (\ref{eq:mtupleCombinatorial}) is at most
\begin{align}\label{eq:mtupleCombinatorial-2}
\exp(n\epsilon^2\ln K+nO_K(\ln\ln K)+o(n)).
\end{align}

We now compute an upper bound on the probability that a given $m$-tuple $\sigma^1,\ldots,\sigma^m$ satisfying (b), consists of satisfying assignments.
Let $C$ be a clause generated uniformly at random from the space of all clauses (a generic element of the formula $\bfPhi(n,dn)$).
Applying
the truncated exclusion-inclusion principle, the probability that $C$ is satisfied by every assignment $\sigma^1,\ldots,\sigma^m$ is
\begin{align*}
\pr(C~\text{satisfied by}~\sigma^j,~\forall j=1,\ldots,m)&=1-\pr(\exists j: ~C~\text{is not satisfied by}~\sigma^j,~1\le j\le m)\\
&\le 1-\sum_{1\le j\le m}\pr(C~\text{is not satisfied by}~\sigma^j)\\
&+\sum_{1\le j_1< j_2\le m}\pr(C~\text{is not satisfied by}~\sigma^{j_1},\sigma^{j_2}).
\end{align*}
Now $\pr(C~\text{is not satisfied by}~\sigma^j)=2^{-K+1}$. Also for every two assignments $\sigma^1$ and $\sigma^2$ which disagree in $n_0\le n$ variables
\begin{align*}
\pr(C~\text{is not satisfied by}~\sigma^{1},\sigma^{2})=2^{-K+1}\left(\left({n_0\over n}\right)^K+\left(1-{n_0\over n}\right)^K\right).
\end{align*}
We conclude that for every $m$-tuple $\sigma_1,\ldots,\sigma_m$ satisfying (b)
\begin{align*}
\pr(\sigma^1,\ldots,\sigma^m\in\mathbb{SAT}(\bfPhi(n,dn))&\le \left(1-m2^{-K+1}+(m(m-1)/2)2^{-K+1}(\beta^K+(1-\beta+\eta)^K)\right)^{dn} \\
&\le \left(1-\epsilon^2K(\ln K)^{-1}2^{-K+1}+\epsilon^4K^2(\ln K)^{-2}2^{-K+2}(K^{-1}+o_K(K^{-1}))\right)^{dn}.
\end{align*}
Here we used the fact that for $\beta=\ln K/K$ and $\eta=(\ln K/K)^2$, we have
\begin{align*}
\beta^K+(1-\beta+\eta)^K=K^{-1}+o_K(K^{-1}).
\end{align*}
The upper bound then simplifies to
\begin{align*}
\left(1-\epsilon^2 K(\ln K)^{-1}2^{-K+1}+o_K(K(\ln K)^{-1}2^{-K})\right)^{dn},
\end{align*}
which applying the lower bound  $d\ge (1+\epsilon) (2^{K-1}/K)\ln^2 K$ leads to a bound
\begin{align*}
\exp\left(-n(1+\epsilon)\epsilon^2\ln K+no_K(\ln K)\right).
\end{align*}
Now combining with (\ref{eq:mtupleCombinatorial-2}),
we conclude that the expected number of $m$-tuples satisfying conditions (a) and (b) is at most
\begin{align*}
\exp(-n\epsilon^3\ln K+no_K(\ln K)),
\end{align*}
and the proof of the theorem is complete.
\end{proof}

\section{Proof of Theorem~\ref{theorem:MainResultNAE-$K$-SAT}}\label{section:ProofOfMainTheorem}
The main result of this section states that  if a $\tau$-\local\ algorithm works
well on random instances of NAE-$K$-SAT, then it can be run several times
to produce several satisfying assignments, and in particular such that their overlaps (Hamming distances) satisfy properties (a),(b) described in the previous sections with parameters
$\alpha,\eta$ and $m$ given in Theorem~\ref{theorem:StrongClustering}. Since such overlaps are ''forbidden'' by this theorem, we will obtain a contradiction.
We state our main proposition below and show how
Theorem~\ref{theorem:MainResultNAE-$K$-SAT} follows almost immediately.
The rest of this section is devoted to the proof of the proposition.

We first recall some notation from
Section~\ref{section:LongRangeIndependence}.
Given a local rule $\tau:\mathcal{SAT}_r \to [0,1]$,
let $\sigma_{\Phi,\bfZ,\bfU}$ denote the assignment
produced by the $\tau$-\local\ algorithm on input $\Phi$,
ordering given by $\bfZ$, and using $\bfU$ to determine
the rounding of the probabilities given by $\tau$.
Recall that $\rho(\sigma^1,\sigma^2)$ denotes the Hamming
distance between assigments $\sigma^1$ and $\sigma^2$.
Let $\alpha_n$ denote the probability that $\tau$-\local\ algorithm finds a satisfying assignment in a random formula $\bfPhi(n,dn)$.
Namely,
$\alpha_n=\pr(\sigma_{\bfPhi(n,dn),\bfZ,\bfU}\in\mathbb{SAT}(\bfPhi(n,dn)))$ and the claim of Theorem~\ref{theorem:MainResultNAE-$K$-SAT}
is that $\lim_n\alpha_n=0$.

\begin{proposition}
\label{prop:local-overlap}
Fix $r <\infty$ and let
$\tau:\mathcal{SAT}_r \to [0,1]$ be any balanced local rule.
Suppose $\limsup_n\alpha_n>0$. Then for every $0<\eta<\beta$ such that $[\beta-\eta,\beta]\subset [0,1/2]$
and every positive integers $m$,  $K$ and $d$,
$$\liminf_n\pr_{\bfPhi(n,dn)}\left(
\mathbb{SAT}(\bfPhi(n,dn);\beta,\eta,m)\ne \emptyset
\right)>0.$$
\end{proposition}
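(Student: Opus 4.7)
The plan is to establish the contrapositive-style claim: if $\alpha_n$ is bounded away from zero along some subsequence, then on that subsequence there is a positive probability that $\bfPhi(n,dn)$ admits $m$ satisfying assignments with pairwise Hamming distances in the prescribed window $[(\beta-\eta)n,\beta n]$. The strategy is to run the $\tau$-\local\ algorithm $m$ times against a carefully coupled source of randomness and use a discrete interpolation on the random seeds to tune the pairwise overlaps simultaneously to the desired value.

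Concretely, I would draw independent uniform seed vectors $\mathbf{W}^0,\mathbf{W}^1,\ldots,\mathbf{W}^m$ on $[0,1]^n$, independent of $(\bfPhi,\bfZ)$. For each $t\in\{0,1,\ldots,n\}$ and $j\in\{1,\ldots,m\}$ define the hybrid seed $\bfU^j(t)$ by letting $\bfU^j(t)_i=\mathbf{W}^j_i$ if $\pi^{-1}(x_i)\le t$ and $\bfU^j(t)_i=\mathbf{W}^0_i$ otherwise, where $\pi$ is the permutation induced by the order statistics of $\bfZ$. Set $\sigma^j(t):=\sigma_{\bfPhi,\bfZ,\bfU^j(t)}$ and $D(t,j,k):=\rho(\sigma^j(t),\sigma^k(t))$. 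At $t=0$ all seeds coincide with $\mathbf{W}^0$, so $D(0,j,k)=0$; at $t=n$ the seeds $\bfU^j(n)=\mathbf{W}^j$ are independent, producing $m$ independent runs of the algorithm.

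Two observations drive the proof. First, for every fixed $t$, the marginal distribution of each $\bfU^j(t)$ is uniform on $[0,1]^n$ (each coordinate is a mixture of two independent uniforms), and conditional on $(\bfPhi,\bfZ,\mathbf{W}^0)$ the hybrid seeds $\bfU^1(t),\ldots,\bfU^m(t)$ are i.i.d.; two applications of Jensen's inequality then give $\pr(\sigma^j(t)\in\mathbb{SAT}(\bfPhi)\text{ for all }j)\ge \alpha_n^m$ for every $t$. Second, going from $t$ to $t+1$ flips each $\bfU^j$ at the single coordinate $x_{\pi(t+1)}$, so Proposition~\ref{prop:InfluenceResistance} implies that each $\sigma^j$ is altered on at most $|\mathcal{IR}_{x_{\pi(t+1)}}|$ coordinates, and Proposition~\ref{prop:MaximumIR} upgrades this to the uniform bound $|D(t+1,j,k)-D(t,j,k)|\le 2n^{1/3}$ with probability $1-o(1)$. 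Lemma~\ref{lemma:equiprob} applied at $t=n$ (where the seeds are independent) yields $\E[D(n,j,k)\mid\bfPhi,\bfZ]=n/2$, and a McDiarmid bounded-differences estimate with coefficients $c_i=|\mathcal{IR}_{x_i}|$ shows that each $D(t,j,k)$ lies within $o(n)$ of its conditional mean $\bar D(t):=\E[D(t,j,k)\mid\bfPhi,\bfZ]$, which is independent of $(j,k)$ by exchangeability.

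I would then choose $t^*_n=t^*_n(\bfPhi,\bfZ)$ to be the smallest $t$ at which $\bar D(t)$ enters the interior of $[(\beta-\eta)n,\beta n]$. Since $\bar D(0)=0$, $\bar D(n)=n/2\ge \beta n$, and the jumps of $\bar D$ are at most $2n^{1/3}$, such a $t^*_n$ exists and $\bar D(t^*_n)$ lies well inside the target window. A union bound over the $\binom{m}{2}$ pairs combined with the McDiarmid estimate shows that all $D(t^*_n,j,k)$ simultaneously lie in $[(\beta-\eta)n,\beta n]$ with probability $1-o(1)$, and combining with the Jensen bound applied at the random time $t^*_n$ yields that $(\sigma^1(t^*_n),\ldots,\sigma^m(t^*_n))\in\mathbb{SAT}(\bfPhi(n,dn);\beta,\eta,m)$ with probability at least $\alpha_n^m - o(1)$, which is bounded below along the subsequence where $\alpha_n$ is bounded below. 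The technically delicate point is that $t^*_n$ is itself a function of $(\bfPhi,\bfZ)$, so the McDiarmid concentration and the Jensen lower bound must be coordinated at the same random cut-off; the cleanest resolution is to condition on the high-probability event $\{\max_i |\mathcal{IR}_{x_i}|\le n^{1/3}\}$ of Proposition~\ref{prop:MaximumIR} and then apply both bounds conditionally on $(\bfPhi,\bfZ)$ at the now-deterministic cut-off $t^*_n(\bfPhi,\bfZ)$.
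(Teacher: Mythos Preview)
Your proposal is correct and follows essentially the same approach as the paper's proof: interpolate between a common seed $\mathbf{W}^0$ and independent seeds $\mathbf{W}^j$, use Proposition~\ref{prop:MaximumIR} to bound one-step changes by $n^{1/3}$, pick a cut-off $t^*$ (deterministic given $(\bfPhi,\bfZ)$) where the expected pairwise distance lands in the window, apply Azuma/McDiarmid for concentration, and use Jensen (conditioning on the shared seed coordinates) to lower-bound the probability that all $m$ runs are satisfying by $\alpha_n^m$. The only cosmetic differences are that the paper interpolates in coordinate order rather than $\pi$-order and includes the base run $\sigma_{\mathbf{W}^0}$ among its $m$ assignments, neither of which matters.
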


\begin{proof}[Proof of Theorem~\ref{theorem:MainResultNAE-$K$-SAT}]
The result follows immediately from Theorem~\ref{theorem:StrongClustering} and Proposition~\ref{prop:local-overlap} by setting $\beta,\eta$ and $m$ exactly
as in Theorem~\ref{theorem:StrongClustering} and noting that $[\beta-\eta,\beta]\subset [0,1/2]$ is satisfied for sufficiently large $K$.
\end{proof}

\subsection{Proof of \protect Proposition~\ref{prop:local-overlap}}

\begin{proof}[Proof of \protect Proposition~\ref{prop:local-overlap}]
Given a random formula $\bfPhi(n,dn)$ and a random sequence $\bfZ$ generating the order of setting the variables, let us consider
$m$ independent vectors $\bfU^0,\ldots,\bfU^{m-1}$ which can be used to generate assignments.
By definition we have
\begin{align*}
\pr(\sigma_{\bfPhi(n,dn),\bfZ,\bfU^j}\in\mathbb{SAT}(\bfPhi(n,dn)))=\alpha_n,
\end{align*}
for $j=0,\ldots,m-1$.
We now construct a sequence of vectors $\bfV^{t,j}, 0\le t\le n,0\le j\le m-1$, where for each $j=1,\ldots,m-1$, the sequence $\bfV^{t,j}$ will
interpolate between vectors $\bfU^0$ and $\bfU^j$. Specifically, let
$\bfV^{t,j}=(V_1^{t,j},\ldots,V_n^{t,j})$ where $V^{t,j}_i=U^j_i, i\le t$ and $V^{t,j}_i=U^0_i, t<i\le n$.
Note that for every $t=0,1,\ldots,n$, $\bfV^{t,j}$ is a vector of i.i.d. random variables with the uniform in $[0,1]$ distribution. Furthermore,
$\bfV^{0,j}=\bfU^0$,$\bfV^{t,0}=\bfU^0$, and $\bfV^{n,j}=\bfU^j$.
Recall the notation $\mathcal{IR}_{x_t}$ for the influence region
of variable $x_t$, i.e., all variables whose decision is potentially
influenced by be assigment of $x_t$ by the $\tau$-\local\ algorithm.
Observe that given any realizations $\bfu^j, 0\le j\le m-1$ of vectors $\bfU^j$,  and the corresponding realizations $\bfv^{t,j}$ of $\bfV^{t,j}$, we have
\begin{align}\label{eq:rhoincrements}
\rho(\sigma_{\Phi,\bfz,\bfv^{t+1,j}},\sigma_{\Phi,\bfz,\bfv^{t,j}})\le |{\mathcal{IR}}_{x_{t+1}}|,\qquad 0\le t\le n-1,
\end{align}
since, $\bfv^{t,j}$ and $\bfv^{t+1,j}$ differ only in one coordinate $t+1$, and
by Proposition~\ref{prop:InfluenceResistance} changing the value of $u_{t+1}$ impacts only the decisions for variables in
${\mathcal{IR}}_{x_{t+1}}$.
We now consider a realization $\Phi$ of a formula $\bfPhi(n,dn)$ and realization $\bfz$ of the order $\bfZ$. $\Phi$ and $\bfz$ uniquely determine
sets ${\mathcal{IR}}_{x_i}, 1\le i\le n$.
Let $\mathcal{E}_n$ denote the event (the set of $\Phi$ and $\bfz$) that
$\max_{1\le i\le n}|{\mathcal{IR}}_{x_i}|\le n^{1/3}$. By Proposition~\ref{prop:MaximumIR}
we have
\begin{align}\label{eq:Enlimitzero}
\lim_{n\rightarrow\infty}\pr(\mathcal{E}_n)=1.
\end{align}
We assume without the loss of generality that $n$ is large enough so that $n^{1/3}<(\beta-\eta)n$.
We have by property (\ref{eq:Probhalf}) of Lemma~\ref{lemma:equiprob} that for every $\Phi$ and $\bfz$,
\begin{align*}
\E[\rho(\sigma_{\Phi,\bfz,\bfU^0},\sigma_{\Phi,\bfz,\bfU^j})]=n/2,
\end{align*}
for each $j=1,\ldots,m-1$.

We first suppose that $\Phi$ and $\bfz$ are realizations such that the event $\mathcal{E}_n$ takes place. Then, we can find $t_0=t_0(\Phi,\bfz)$ such
that
\begin{align*}
\E[\rho(\sigma_{\Phi,\bfz,\bfU^0},\sigma_{\Phi,\bfz,\bfV^{t_0,j}})]\in \left[(\beta-\eta/2)n,(\beta-\eta/2)n+n^{1/3}\right],
\end{align*}
for all $j=1,\ldots,m-1$,
as by (\ref{eq:rhoincrements}) the increments $\rho(\sigma_{\Phi,\bfz,\bfV^{t+1}},\sigma_{\Phi,\bfz,\bfV^{t,j}})$ are bounded by $n^{1/3}$ with probability
one with respect to the randomness of $\bfV^{t,j}$.
Note that $t_0$ does not depend on $j$ since $\bfV^{t,j}$ are identically distributed for $1\le j\le m-1$.
Furthermore, since $\bfU^0$ and $\bfU^j$ are identical in distribution, we also have for every $0\le j_1<j_2\le m-1$
\begin{align*}
\E[\rho(\sigma_{\Phi,\bfz,\bfV^{t_0,j_1}},\sigma_{\Phi,\bfz,\bfV^{t_0,j_2}})]\in \left[(\beta-\eta/2)n,(\beta-\eta/2)n+n^{1/3}\right].
\end{align*}
We now fix $j_1\ne j_2$ and argue that in fact $\rho(\sigma_{\Phi,\bfz,\bfV^{t_0,j_1}},\sigma_{\Phi,\bfz,\bfV^{t_0,j_2}})$ is concentrated around its mean as $n\rightarrow\infty$.
The distance is a function of $n+t_0$ i.i.d. random variables $U_1^{j_1},\ldots,U_{t_0}^{j_1}; U_1^{j_2},\ldots,U_{t_0}^{j_2};U_{t_0+1}^0,\ldots,U_n^0$.
Further, changing any one of these $n+t_0$
random variables changes the distance $\rho$ by at most $2n^{1/3}$ again by Proposition~\ref{prop:InfluenceResistance} and by our assumption that $\Phi$ and $\bfz$
are realizations such that the event $\mathcal{E}_n$ holds.
Applying Azuma's inequality
\begin{align*}
\pr&\left(\Big|\rho(\sigma_{\Phi,\bfz,\bfV^{t_0,j_1}},\sigma_{\Phi,\bfz,\bfV^{t_0,j_2}})-(\beta-\eta/2)n\Big|\ge {\eta\over 4}n\right)\\
&\le 2\exp\left(-{({\eta\over 4}n-2n^{1\over 3})^2\over 2(n+t_0)n^{2\over 3}}\right)\\
&=\exp(-\delta n^{1/3}+o(n^{1\over 3})),
\end{align*}
for some constant $\delta>0$, and the concentration is established. The event
\begin{align*}
\Big|\rho(\sigma_{\Phi,\bfz,\bfV^{t_0,j_1}},\sigma_{\Phi,\bfz,\bfV^{t_0,j_2}})-(\beta-\eta/2)n\Big|< {\eta\over 4}n
\end{align*}
implies the event
\begin{align*}
\rho(\sigma_{\Phi,\bfz,\bfV^{t_0,j_1}},\sigma_{\Phi,\bfz,\bfV^{t_0,j_2}})\in [(\beta-\eta) n,\beta n].
\end{align*}
We conclude that for every $\Phi$ and $\bfz$ such that the event $\mathcal{E}_n$ takes place, we have
\begin{align}\label{eq:rhoConcentration0}
\lim_n\pr\left(\rho(\sigma_{\Phi,\bfz,\bfV^{t_0,j_1}},\sigma_{\Phi,\bfz,\bfV^{t_0,j_2}})\in [(\beta-\eta n),\beta n]\right)=1.
\end{align}
Since $m$ does not depend on $n$, we obtain by union bound
\begin{align}\label{eq:rhoConcentration}
\lim_n\pr\left(\forall~0\le j_1\ne j_2\le m-1,~\rho(\sigma_{\Phi,\bfz,\bfV^{t_0,j_1}},\sigma_{\Phi,\bfz,\bfV^{t_0,j_2}})\in [(\beta-\eta) n,\beta n]\right)=1.
\end{align}
For completion, let us set $t_0=0$ when $\Phi$ and $\bfz$ are such that the event $\mathcal{E}_n$ does not take place.
Let now $T=t_0(\bfPhi(n,dn),\bfZ)$ to be thus defined random variable. This way we have
assignments $\sigma_{\Phi,\bfz,\bfV^{T,j}}, 0\le j\le m-1$ defined for all realizations of $\Phi$ and $\bfz$, in particular whether the event $\mathcal{E}_n$
takes place or not. Since the former is the high probability event, we conclude from above that
\begin{align}\label{eq:rhoConcentration1}
\lim_n\pr\left(\forall~0\le j_1\ne j_2\le m-1,~\rho(\sigma_{\bfPhi(n,dn),\bfZ,\bfV^{T,j_1}},\sigma_{\bfPhi(n,dn),\bfZ,\bfV^{T,j_2}})\in [(\beta-\eta) n,\beta n]\right)=1.
\end{align}
Thus, we established that with high probability as $n\rightarrow\infty$ there exist
a sequence of assignments $\sigma^j\triangleq\sigma_{\bfPhi(n,dn),\bfZ,\bfV^{T,j}},~0\le j\le m-1$ satisfying property (b) of the definition of
$\mathbb{SAT}(\Phi;\beta,\eta,m)$.

Our next goal is to show that the assignments $\sigma^j, 0\le j\le m-1$ above
are also satisfying formula $\bfPhi(n,dn)$ with probability bounded away from zero as $n\rightarrow\infty$. To be exact we claim
\begin{align}\label{eq:SATPositiveProb}
\liminf_n&\pr(\sigma_{\bfPhi(n,dn),\bfZ,\bfV^{T,j}}\in\mathbb{SAT}(\Phi),~0\le j\le m-1)>0.
\end{align}
namely property (a) holds with probability bounded away from zero, and thus the set
$\mathbb{SAT}(\Phi;\beta,\eta,m)$ is non-empty with probability bounded away from zero, as claimed.
Observe that $\sigma_{\bfPhi(n,dn),\bfZ,\bfV^{T,j}}$ have identical distribution
for all $j$. Furthermore, each of them individually is  distributed as $\sigma_{\bfPhi(n,dn),\bfZ,\bfU^j}, 0\le j\le m-1$
since the random variable $T$ only affects the indices $i$ for which we switch from  $U_i^0$ vs $U_i^j$,
and since each vector $\bfU^j$ is an i.i.d. vector of random variables. Therefore,
\begin{align*}
\pr(\sigma_{\bfPhi(n,dn),\bfZ,\bfV^{T,j}}\in\mathbb{SAT}(\bfPhi(n,dn)))=\alpha_n,
\end{align*}
for each $j$.
Suppose $\Phi,\bfz$ are such that the event $\mathcal{E}_n$ takes place
and fix the corresponding deterministic value $t_0=t_0(\Phi,\bfz)$. In the derivation below we use notation $\pr_{Z}$ to indicate probability with
respect random variable $Z$. We have
\begin{eqnarray}
\lefteqn{\pr_{\bfU^0,\ldots,\bfU^{m-1}}(\sigma_{\Phi,\bfz,\bfV^{T,j}}\in\mathbb{SAT}(\Phi),~0\le j\le m-1)} \notag\\
&=& \E_{\bfU^0,\ldots,\bfU^{m-1}}[\textbf{1}(\sigma_{\Phi,\bfz,\bfV^{t_0,j}}\in\mathbb{SAT}(\Phi),~0\le j\le m-1)] \notag\\
&=&\E_{U^0_{t_0+1},\ldots,U^0_n}[\E_{U^j_i, 1\le i\le t_0,1\le j\le m-1}
[\textbf{1}(\sigma_{\Phi,\bfz,\bfV^{t_0,j}}\in\mathbb{SAT}(\Phi)),~0\le j\le m-1~\big|~U^0_{t_0+1},\ldots,U^0_n]] \notag\\
&\stackrel{(a)}{=}&
\E_{U^0_{t_0+1},\ldots,U^0_n}[\E_{U^0_1,\ldots,U^0_{t_0}}^{m}[
\textbf{1}(\sigma_{\Phi,\bfz,\bfU^0}\in\mathbb{SAT}(\Phi))~\big|~U^0_{t_0+1},\ldots,U^0_n]] \notag\\
&\stackrel{(b)}{\ge}&
\E^{m}_{U^0_{t_0+1},\ldots,U^0_n}[\E_{U^0_1,\ldots,U^0_{t_0}}[
\textbf{1}(\sigma_{\Phi,\bfz,\bfU^0}\in\mathbb{SAT}(\Phi))~\big|~U^0_{t_0+1},\ldots,U^0_n]] \notag\\
&=& \E^m_{\bfU^0}[\textbf{1}(\sigma_{\Phi,\bfz,\bfU^0}\in\mathbb{SAT}(\Phi))~\big] \notag\\
&=&\pr^m_{\bfU^0}(\sigma_{\Phi,\bfz,\bfU^0}\in\mathbb{SAT}(\Phi)) \label{eq:Jensen}.
\end{eqnarray}
Here (a) follows since $\bfU^j$ are independent vectors of i.i.d. random variables and (b) follows by applying Jensen's inequality and the convexity of the polynomial
function $t^m$ on $t\in [0,\infty)$ for all positive integers $m$.

Suppose now $\Phi$ and $\bfz$ are such that the event $\mathcal{E}_n$ does not take place. Then
$\sigma_{\Phi,\bfz,\bfV^{T,j}}=\sigma_{\Phi,\bfz,\bfU^0}$, implying
\begin{align*}
\pr_{\bfU^0,\ldots,\bfU^{m-1}}(\sigma_{\Phi,\bfz,\bfV^{T,j}}\in\mathbb{SAT}(\Phi),~0\le j\le m-1)
&=\pr_{\bfU^0}(\sigma_{\Phi,\bfz,\bfU^0}\in\mathbb{SAT}(\Phi))\\
&\ge\pr^m_{\bfU^0}(\sigma_{\Phi,\bfz,\bfU^0}\in\mathbb{SAT}(\Phi)).
\end{align*}
Combining with (\ref{eq:Jensen}) we conclude that for every $\Phi,\bfz$ we have
\begin{align*}
\pr_{\bfU^0,\ldots,\bfU^{m-1}}(\sigma_{\Phi,\bfz,\bfV^{T,j}}\in\mathbb{SAT}(\Phi),~0\le j\le m-1)
\ge \pr^m_{\bfU^0}(\sigma_{\Phi,\bfz,\bfU^0}\in\mathbb{SAT}(\Phi)).
\end{align*}
Since $\pr_{\bfU^0}(\sigma_{\Phi,\bfz,\bfU^0}\in\mathbb{SAT}(\Phi))=\alpha_n$, then integrating over $\bfPhi(n,dn)$ and $\bfZ$,
we obtain
\begin{align*}
\pr(\sigma_{\bfPhi(n,dn),\bfZ,\bfV^{T,j}}\in\mathbb{SAT}(\bfPhi(n,dn)),~0\le j\le m-1)&\ge \pr(\sigma_{\bfPhi(n,dn),\bfZ,\bfU^0}\in\mathbb{SAT}(\bfPhi(n,dn))\\
&=\alpha_n^m,
\end{align*}
implying
\begin{align*}
\liminf_n&\pr(\sigma_{\bfPhi(n,dn),\bfZ,\bfV^{T,j}}\in\mathbb{SAT}(\bfPhi(n,dn)),~0\le j\le m-1)\\
&\ge \liminf_n \alpha_n^m\\
&>0,
\end{align*}
and (\ref{eq:SATPositiveProb}) is established.

\end{proof}


\section*{Acknowledgements}
The authors gratefully acknowledge many enlightening conversations with Federico Ricci-Tersenghi, Riccardo Zecchina, Marc Mezard, Giorgio Parisi, Florent Krzakala, Lenka Zdeborova,
and many others who generously provided constructive feedback to the earlier version of this paper.

\bibliographystyle{amsalpha}

\newcommand{\etalchar}[1]{$^{#1}$}
\providecommand{\bysame}{\leavevmode\hbox to3em{\hrulefill}\thinspace}
\providecommand{\MR}{\relax\ifhmode\unskip\space\fi MR }
\providecommand{\MRhref}[2]{%
  \href{http://www.ams.org/mathscinet-getitem?mr=#1}{#2}
}
\providecommand{\href}[2]{#2}

\end{document}